\theoremstyle{plain}
\newtheorem{lemma}{Lemma}[section]
\newtheorem{theorem}[lemma]{Theorem}
\newtheorem{proposition}[lemma]{Proposition}
\newtheorem{corollary}[lemma]{Corollary}
\theoremstyle{definition}
\newtheorem{example}[lemma]{Example}
\newtheorem{remark}[lemma]{Remark}
\newtheorem{definition}[lemma]{Definition}
\numberwithin{equation}{section} \thispagestyle{empty} \voffset
\begin{document}
\baselineskip 15truept
\title{ On Generalized Rickart $*$-rings}

\date{}

\author{ Anil Khairnar and Sanjay More }
	 
\address{\rm Department of Mathematics, MES Abasaheb Garware College, Pune-411004, India.}
 \email{\emph{ask.agc@mespune.in; anil\_maths2004@yahoo.com}}
 
  \address {\rm Department of Mathematics, Prof. Ramkrishna More College, Akurdi, Pune-411044, India.}
 \email{\emph{sanjaymore71@gmail.com}}

 \subjclass[2020]{Primary 16W10; Secondary 16L45 }

\maketitle {\bf \small Abstract:} {\small 
 A ring $R$ with an involution $*$ is a generalized Rickart $*$-ring if for all $x\in R$
the right annihilator of $x^n$ is generated by a projection for some positive integer $n$
depending on $x$. In this work, we introduce generalized right projection of an element in a $*$-ring and prove that every element in a generalized Rickart $*$-ring has generalized right projection. Various characterizations of generalized Rickart $*$-rings are obtained. We introduce the concept of  generalized weakly Rickart $*$-ring and provide a characterization of generalized Rickart $*$-rings in terms of weakly generalized Rickart $*$-rings. It is shown that generalized Rickart $*$-rings satisfy the parallelogram law. A sufficient condition is established for partial comparability in generalized Rickart $*$-rings. Furthermore, it is proved that pair of projections in a generalized Rickart $*$-ring possess orthogonal decomposition. }

 \noindent {\bf Keywords:}  generalized Rickart $*$-ring, generalized right projection, projections, generalized weakly Rickart $*$-ring.
\section{Introduction}

\indent   Kaplansky \cite{Kap} introduced Baer rings and Baer $*$-rings to
generalize various properties of $AW^*$-algebras (i.e., a $C^*$-algebra which is also a Baer $*$-ring), von Neumann
algebras and complete $*$-regular rings. The concept of a Baer $*$-ring arises naturally from the study of functional analysis. For instance, every von Neumann algebra is a Baer $*$-algebra. For recent work on rings with involution, one can refer to  \cite{Anil,Anil2,Anil3,Anil4,more1,Tha2}.

       A ring $R$ is said to be {\it reduced} if it does not contains
    nonzero nilpotent element.  A ring $R$ is said to be {\it abelian} if its every idempotent element is central.  
   Let $S$ be a nonempty subset of $R$. We write
          $r(S)= \{a \in R ~:~ s a = 0, ~\forall ~s \in S \}$, and is called the
          {\it right annihilator} of $S$ in $R$, and $l(S)= \{a \in R ~|~  a s = 0, ~\forall ~s \in S \}$, 
          is the
         { \it left annihilator} of $S$ in $R$. Let $R$ be a ring and $a\in R$, then we write $r(\{a\})=r(a)$ and $l(\{a\})=l(a)$.      
    A {\it $*$-ring $R$} is a ring equipped with an involution $x \rightarrow x^* $,
                        that is, an additive anti-automorphism of the period at most two.
    An element $e$ of a $*$-ring  $R$ is called a {\it projection} if it is self-adjoint 
         (i.e. $e=e^*$)
       and idempotent (i.e. $e^2=e$).  Let $P$ be a poset and $a,b \in P$. The join of $a$ and $b$, denoted by
       $a\vee b$, is defined as $a \vee b = \sup~ \{a, b\}$. The meet of $a$ and $b$, denoted by $a\wedge b$, is defined as $a \wedge b= inf~ \{a, b\}$.  
       In a poset $(P, \leq)$, $a < b$ denotes $a\leq b$ with $a \neq b$. Let $R$ be a $*$-ring and $e,f \in R $ be projections, we say that $e \leq f$ if $e=ef$, this defines a partial order on the set of all projections in $R$.
      A $*$-ring $R$ is said to be a {\it  Rickart $*$-ring},
  if for each $x \in R$, $r(x)=eR$, where $e$ is a projection
  in $R$. 
   For each element $x$ in a Rickart $*$-ring, there is unique projection
  $e$ such that $xe=x$ and $xy=0$ if and only if $ey=0$, called the {\it
  	right projection} of $x$, denoted by $RP(x)$. Similarly, the left
  projection $LP(x)$ is defined for each element $x$ in Rickart $*$-ring.
  A $*$-ring $R$ is said to be a {\it weakly Rickart  $*$-ring}, if for any $x \in R$, there exists a projection $e$
  such that (1) $xe=x$, and (2) if for $y\in R$, $xy=0$ then $ey=0$.
  
     In \cite{Ber}, Berberian posed the following open problem.
            
 {\bf Problem 1:} Can every weakly Rickart $*$-ring be
 embedded in a Rickart $*$-ring with preservation of $RP$'s?\\
 In \cite{Ber} Berberian provided a partial solution to this problem. Subsequently, in \cite{Tha3}, authors offered another partial solution. In \cite{more1}, a more general partial solution to Problem 1 is presented. 
 
  Let $R$ be a $*$-ring. The projections $e, f$ in $R$ are said to be {\it equivalent}  (written as $e \sim f$),  if there exists $w\in R$
 such that $w^*w=e$ and $ww^*=f$ (see \cite {Ber}). By \cite[Proposition 7, page 5]{Ber},
 the relation $\sim$ is an equivalence relation on the set of projections in $R$.   A $*$- ring is said to satisfy {\it parallelogram law} if  $e-e\wedge f \sim  e\vee f-f$ for every pair of projection $e$ and $f$ whose $e\wedge f$ and $e\vee f$ exists. Projections $e$ and $f$ in a $*$-ring $R$ are said to be {\it partially comparable} if there exist non-zero projections $e_{0}$ and $f_{0}$ such that $e_{0} \le  e$,  $f_{0} \le f$ and $e_{0} \sim  f_{0}$.
 We say that a $*$-ring $R$ has  {\it partial comparability} $(PC)$ if $eRf \neq  0$ implies  $e$ and  $f$ are partially comparable.  Let $R$ be a $*$-ring and $e, f$ be projections in $R$. We say that $e$ is {\it dominated } by $f$,
 written as $e \lesssim f$, if $e \sim g \leq f$ for some projection $g \in R$. Projections $e$ and $f$ in a $*$-ring $R$ are said to be {\it generalized comparable} if there exists central projection $h$ such that $he \lesssim hf$ and $(1-h)f \lesssim (1-h)e$.
 We say that $R$ has {\it generalized comparability} $(GC)$ if every pair of projection in $R$ is generalized comparable. We say that a $*$-ring $R$ has { \it orthogonal GC} if every pair of orthogonal projections are generalized comparable. 
 Projections $e$ and $f$ in a $*$-ring $R$ are said to be {\it very orthogonal} if there exists central projection $h$ such that $he = e$ and $hf = 0$.
  
In \cite{cui2018pi}, the authors introduced the concept of generalized Rickart $*$-ring.  A $*$-ring $R$ is called a generalized Rickart $*$-ring, if, for any $x\in R$,
 there exists a positive integer $n$ such that $r(x^n)=gR$, for some projection $g$ of $R$.
 In generalized Rickart $*$-rings, we also have $l(x^n)=Rh$ for some projection $h\in R$. This indicates that generalized Rickart $*$-rings are left-right symmetric. Generalized Rickart $*$-rings serve as a common generalization of both Rickart $*$-rings and generalized Baer $*$-rings. In \cite{ahmadigen2021}, M. Ahmadi and A. Moussavi explored the behavior of the generalized Rickart $*$-condition under various constructions and extensions. They also provided  examples of
 generalized Rickart $*$-rings and identified classes of finite and infinite-dimensional Banach $*$-algebras that are generalized Rickart $*$-rings but not Rickart $*$-rings. 
 
    In the second section of this paper, we introduce generalized right projection of an element in a $*$-ring and prove that every element of a generalized Rickart $*$-ring has a generalized right projection. Properties of generalized right projection of  elements in a generalized Rickart $*$-ring are also studied. We introduce the concept of a generalized weakly Rickart $*$-ring and provide a characterization of generalized Rickart $*$-rings in terms of generalized weakly Rickart $*$-rings. It is shown that the center and corner of a generalized weakly Rickart $*$-ring are themselves generalized weakly Rickart $*$-ring. In section 3, we pose a problem for generalized Rickart $*$-rings analogous to Problem 1 for Rickart $*$-rings and provide a partial solution. In section 4, we prove that generalized Rickart $*$-rings satisfy the parallelogram law. A sufficient condition is established for generalized Rickart $*$-rings to exhibit partial comparability. Furthermore, it is shown that pairs of projections in a generalized Rickart $*$-ring that satisfying the parallelogram law possess orthogonal decomposition. 
  
 \section{Weakly Generalized  Rickart $*$-rings}  
 
   It is evident that every Rickart $*$-ring is a generalized Rickart $*$-ring. 
  In this section, we first recall examples and results from \cite{ahmadigen2021}, which provides instances of generalized Rickart $*$-rings that are not Rickart $*$-rings. We then introduce generalized right projection of an element and generalized left projection of an element in a $*$-ring. We prove that every element of a generalized Rickart$*$-ring has a generalized right projection and discuss the properties of these projections. Furthermore, we introduce the class of generalized weakly Rickart $*$-rings and establish characterizations of generalized Rickart $*$-rings. 
  
 \begin{example} [\cite{ahmadigen2021}, Example 2.8]
 	(i) Let $S = C[x, y]/(x, y)^n$. Then $S$ is a commutative local ring with unique maximal ideal $(\overline x, \overline y)$
 	having index of nilpotency $n$. The set $S$ with the conjugate map as the involution, is a generalized Rickart $*$-ring but not Rickart $*$-ring.\\
 	(ii) Let $G$ be a finite abelian $p$-group. Then the group algebra $T = F_pG$ is a finite commutative local
 	ring with unique maximal ideal $rad(T)$ having index of nilpotency $|G|$. Let $*$ be the involution on the group ring $T$ defined by $(\sum a_gg)^* = \sum a_gg^{-1}$. Hence $T$ is a generalized Rickart $*$-ring but not Rickart $*$-ring.\\
 	(iii) Let $S$ and $T$ be the rings in (i) and (ii) respectively. Let $n$ be a positive integer and let $p$ be 	a prime. Then the $*$-ring $R = S \oplus T \oplus \mathbb Z_{p^n}$ is a generalized Rickart $*$-ring that is not Rickart $*$-ring.\\
 	(iv) Let $T=\{a_0+a_1i+a_2j+a_3k ~:~ a_i\in \mathbb Z_2~\text{for}~ i=1,2,3,4\}$ be the Hamilton quaternions over $\mathbb Z_2$. Then $T$ is a commutative ring such that every element of $T$ is either invertible or nilpotent.
 	For $\alpha=a_0+a_1i+a_2j+a_3k$ with $a_0,a_1,a_2,a_3\in \mathbb Z_2$, define $\alpha^*=a_0-a_1i-a_2j-a_3k$. Then $*$ is an involution for $T$. Then $T$ is generalized Rickart $*$-ring but not a Rickart $*$ ring.
 \end{example}
 
  \begin{definition}
  	(\cite{ahmadiquasi2020}, Definition 5.1). Let $R$ be a ring with unity, and let $n \geq 2$ be an integer. Put $V_n = \sum_{i=1}^{n-1}E_{i,i+1}$.
  	The triangular matrix rings $S_n(R), A_n(R), B_n(R), U_n(R)$, and $V_n(R)$ are define as follows:
  	$$A_n(R)=RI_n+\displaystyle \sum_{l=2}^{[\frac{n}{2}]}RV_n^{l-1}+\sum_{i=1}^{[\frac{n+1}{2}]}\sum_{j=[\frac{n}{2}]+i}^{n}RE_{ij}.$$
  	$$B_n(R)=RI_n+\displaystyle \sum_{l=3}^{[\frac{n}{2}]}RV_n^{l-2}+\sum_{i=1}^{[\frac{n+1}{2}]+1}\sum_{j=[\frac{n}{2}]+i-1}^{n}RE_{ij}.$$
  	$$U_n(R)=RI_n+\displaystyle \sum_{i=1}^{[\frac{n-1}{2}]}\sum_{j=[\frac{n}{2}]+1}^{n}RE_{ij}+\sum_{j=[\frac{n-1}{2}]+2}^{n}RE_{[\frac{n-1}{2}]+1,j}.$$
  	$$S_n(R)=RI_n+\displaystyle \sum_{i<j}RE_{i,j}, ~~ V_n(R)=RI_n+\displaystyle \sum_{l=2}^nRV_n^{l-1}.$$
  \end{definition}
  
  \begin{proposition} [\cite{ahmadigen2021}, Theorem 4.6] 
  	Let $A$ be an abelian $C^*$-algebra. If $A$ is a generalized Rickart $*$-ring, then the Banach $*$-algebras $S_n(\mathcal A), A_n(\mathcal A), B_n(\mathcal A), U_n(\mathcal A)$, and $V_n(\mathcal A)$ are generalized Rickart $*$-rings but not Rickart
  	$*$-rings. In particular, the Banach $*$-algebras $S_n(C), A_n(C), B_n(C), U_n(C)$, and $V_n(C)$ are generalized
  	Rickart $*$-rings but not Rickart $*$-rings.
  \end{proposition}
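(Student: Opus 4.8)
The plan is to treat all five families uniformly by exploiting their common shape: each of $S_n(\mathcal A),A_n(\mathcal A),B_n(\mathcal A),U_n(\mathcal A),V_n(\mathcal A)$ consists of the scalar diagonal $\mathcal A I_n$ together with a strictly upper triangular (hence nilpotent) ideal, so every element decomposes uniquely as $x=aI_n+\eta$ with $a\in\mathcal A$ and $\eta$ nilpotent of index at most $n$. Throughout let $R$ denote whichever of the five rings is under consideration. First I would record a reduction: an abelian $C^*$-algebra has no nonzero nilpotents, so it is reduced, and in a commutative reduced ring $r(a^m)=r(a)$ for every $m\ge 1$; hence the hypothesis that $\mathcal A$ is a generalized Rickart $*$-ring already forces $\mathcal A$ to be a Rickart $*$-ring. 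Consequently each $a\in\mathcal A$ has a right projection $p=RP(a)$ with $ap=a$ and $r(a)=(1-p)\mathcal A$, and $a$ is a non-zero-divisor on the corner $p\mathcal A$.

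The heart of the argument is to compute the annihilator of a high power of $x=aI_n+\eta$. Put $p=RP(a)$ and $P=pI_n$; since $\mathcal A$ is commutative, $P$ is a central projection of $R$. Because $a(1-p)=0$, on the corner $(1-P)R$ the element $x$ reduces to the nilpotent $(1-P)\eta$, so $(1-P)x^{m}=0$ once $m\ge n$, whence $(1-P)R\subseteq r(x^{m})$. Conversely, writing $w=Px=aI_n+P\eta$, this is an upper triangular matrix over the commutative ring $p\mathcal A$ whose diagonal entries all equal the non-zero-divisor $a$, so $\det(w^{m})=a^{nm}$ is again a non-zero-divisor; from $x^{m}=w^{m}$ (for $m\ge n$) and $w=wP$ one gets $w^{m}(Py)=0$, and multiplying by the adjugate of $w^{m}$ yields $a^{nm}(Py)=0$, hence $Py=0$ since the entries of $Py$ lie in $p\mathcal A$. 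Thus $x^{m}y=0$ forces $y\in(1-P)R$, and we conclude $r(x^{m})=(1-P)R=\big((1-p)I_n\big)R$, generated by the projection $(1-p)I_n$. This establishes that each of the five rings is a generalized Rickart $*$-ring.

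For the negative half I would first classify the projections. If $e=aI_n+\eta$ is idempotent, comparing diagonals gives $a^2=a$, and substituting back yields $\eta^2=(1-2a)\eta$; since $1-2a$ is a central symmetry this forces $\eta^{3}=\eta$, which together with nilpotency of $\eta$ gives $\eta=0$. Hence the only projections are the scalars $pI_n$ with $p$ a projection of $\mathcal A$, and in particular $(1-p)I_n$ really does lie in $R$ and is self-adjoint for the (anti-transpose) involution. Now in each family one selects a nonzero element $N$ of the nilpotent ideal having an entry equal to $1$, for instance a superdiagonal generator such as $V_n$. Then $N^{\,n-1}\ne 0$ lies in $r(N)$, so $r(N)\ne 0$, whereas for any scalar projection $pI_n$ the product $N(pI_n)=pN$ vanishes only when $p=0$. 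Therefore $r(N)$ cannot equal $gR$ for any projection $g$, so none of these rings is a Rickart $*$-ring. The ``in particular'' statement is the special case $\mathcal A=\mathbb C$.

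The main obstacle I anticipate is making this uniform argument genuinely cover all five families: one must check in each case that the off-diagonal part is indeed a nilpotent ideal of index at most $n$, and that a nilpotent $N$ with a unit entry and $r(N)\ne 0$ is actually available inside the prescribed index ranges. The conceptual core — the centrality of $pI_n$, the adjugate/determinant computation giving $r(x^{m})=\big((1-p)I_n\big)R$, and the projection classification — is robust and essentially family-independent, so I expect the real labor to be the bookkeeping of the defining ranges of $A_n,B_n,U_n$ rather than any new idea.
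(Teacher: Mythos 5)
The paper does not actually prove this proposition: it is quoted verbatim from Ahmadi and Moussavi (\cite{ahmadigen2021}, Theorem 4.6) as background, so there is no in-paper argument to compare yours against. Taken on its own terms, your proof is essentially sound and self-contained: the reduction showing $\mathcal A$ is already Rickart (commutative reduced $\Rightarrow r(a^m)=r(a)$), the centrality of $P=pI_n$, the splitting $x^m=(Px)^m$ for $m\ge n$, the adjugate computation forcing $Py=0$, the classification of projections via $\eta^2=(1-2a)\eta\Rightarrow\eta^3=\eta\Rightarrow\eta=0$, and the choice of a self-adjoint $N=E_{1n}$ (which lies in all five families) to defeat the Rickart property are all correct. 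Two caveats. First, the phrase ``$\det(w^m)=a^{nm}$ is again a non-zero-divisor'' is loose, since $a^{nm}$ is a non-zero-divisor only on $p\mathcal A$; but your next clause ($a^{nm}(Py)=0$ with the entries of $Py$ in $p\mathcal A$) carries the actual weight and is fine. Second, and more substantively, the ``common shape'' premise (scalar diagonal plus strictly upper triangular part) that you deferred as bookkeeping is not a formality: with the definition as reproduced in this paper, $B_n(R)$ contains $RE_{ii}$ whenever $[\frac n2]=1$, so $B_2(R)=T_2(R)$ and $B_3(R)=T_3(R)$, and for these the conclusion is actually false (in $T_2(\mathbb C)$ the only projections are $0$ and $1$, yet $r(E_{11}^m)=r(E_{11})$ is a proper nonzero right ideal for every $m$). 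So the hypothesis on the admissible range of $n$ must enter exactly at the point you skipped; your argument is complete precisely for those $n$ for which each family really equals $\mathcal A I_n$ plus a strictly upper triangular part, which is the intended setting of the cited theorem.
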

  
   \begin{example}[\cite{ahmadigen2021}, Example 4.7]
  	Let $A$ be an infinite-dimensional commutative von Neumann algebra. Then $A$ is a Baer $*$-ring (and hence a Rickart $*$-ring). Now let $R$ be the ring $Sn(C) (An(C), Bn(C), Un(C), or Vn(C))$ and put $T = A\oplus R$ (equipped with the max norm). Then $T$ is an infinite-dimensional Banach $*$-algebra and $T$ is a generalized Rickart $*$-ring but not a Rickart $*$-ring.
  \end{example}
 
   Following is an example of a finite generalized Rickart $*$-ring that is not a Rickart $*$-ring.
  
\begin{example}	Let $ R=\mathbb{Z}_4 $ be a $*$-ring with an identity involution .
 Here $0, 1$ are only projections in $ \mathbb{Z}_4 $.
 Now $ r(2)=\{0,2\} \ne 0\mathbb{Z}_4$ and $  r(2)=\{0,2\} \ne 1\mathbb{Z}_4 $.
 Hence $ r(2)$ is not generated by any projection in $R$. Therefore $ \mathbb{Z}_4 $ is not a Rickart $ *$-ring. Observe that $ r(0)=1 \mathbb{Z}_4, r(1)=0 \mathbb{Z}_4 $,
 $ r(2^2)=r(0)=1\mathbb{Z}_4 $, and  $r(3)=0 \mathbb{Z}_4$. 
 Hence for every $ x\in \mathbb{Z}_4,\exists~ n\in\mathbb{N} $ and projection $e$ in $ \mathbb{Z}_4 $ such that $ r(x^n)=e\mathbb{Z}_4 $. Therefore  $ \mathbb{Z}_4$ is a generalized Rickart $*$-ring.
 \end{example}
   
The following is an example of a finite $*$-ring that is not a generalized Rickart $*$-ring.
 
 \begin{example}  
 Let $ R=M_4(\mathbb{Z}_4) $ with transpose as an involution. Then $R$ is not a generalized Rickart $*$-ring (see Example \ref{s2ex2}). 
\end{example}

 In the following result, we prove that generalized Rickart $*$-rings, always contain the multiplicative identity (unity) element.
 
\begin{proposition} \label{s2pr1} If $R$ is a generalized  Rickart $ *$-ring then it has unity element.
\end{proposition}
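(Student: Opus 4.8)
The plan is to extract the unity directly from the defining condition evaluated at the zero element. The key observation is that $r(0)=R$, since $0\cdot a=0$ for every $a\in R$, and that $0^n=0$ for every positive integer $n$. Applying the generalized Rickart $*$-condition to $x=0$ therefore yields a projection $g$ and a positive integer $n$ with $r(0^n)=gR$; but $r(0^n)=r(0)=R$, so $R=gR$. Note that $g=g^2\in gR$, so $g$ itself lies in $R=gR$, and the expression $gR=\{gr:r\in R\}$ is meaningful without assuming a unity in advance.

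First I would show that $g$ is a left identity. Given an arbitrary $a\in R$, the equality $R=gR$ lets me write $a=gr$ for some $r\in R$; then, using $g^2=g$, I get $ga=g(gr)=g^2r=gr=a$. Hence $ga=a$ for all $a\in R$.

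Next I would upgrade this left identity to a two-sided identity by invoking the involution. Since $g$ is a projection it is self-adjoint, $g^*=g$, and applying the anti-automorphism $*$ to $ga=a$ gives $a^*g^*=a^*$, that is, $a^*g=a^*$. As $a$ ranges over $R$ and $a\mapsto a^*$ is a bijection of $R$, this says $bg=b$ for every $b\in R$, so $g$ is also a right identity. Therefore $g$ is the unity of $R$.

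The only delicate point, and the step I would be most careful about, is making sure the argument never tacitly presupposes a unit: the notation $gR$ must be read as $\{gr:r\in R\}$, and the transition from a left identity to a right identity must genuinely use that $*$ reverses products (being an anti-automorphism) rather than merely reproducing the left-sided statement. The left-right symmetry recorded in the introduction, namely $l(x^n)=Rh$ for a projection $h$, offers an alternative route to the right identity via $l(0)=R=Rh$, but the involution argument is shorter and self-contained.
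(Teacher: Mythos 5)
Your proof is correct and follows essentially the same route as the paper: apply the defining condition to $x=0$ to obtain $R=r(0)=gR$ for a projection $g$, and deduce $ga=a$ for all $a$ from $a=gr$ and $g^2=g$. In fact you are slightly more careful than the paper, whose proof stops after verifying that $g$ is a left identity and simply declares it the unity; your additional step using $g^*=g$ and the anti-automorphism property of $*$ to get $ag=a$ closes that small gap.
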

\begin{proof} Let $R$ be a generalized  Rickart $ *$-ring.
	Since for any $ n\in\mathbb{N} $, $r(0^n)=r(0)=R=eR $ for some projection $e \in R$. 
	Therefore for $x\in R$, we have $x=ey$ for some $y \in R$. This implies $ex=e^2y=ey=x$.
	Thus $e$ is the unity element in $R$.
\end{proof}

\begin{proposition} \label{s2pr2} If $R$ is a generalized  Rickart $*$-ring then for every $ x \in R $ there exists $ n\in\mathbb{N} $ and a projection $ e\in R$ such that $x^ne=x^n$; and for $y \in R$, if $x^ny=0$ then $ey=0 $.
\end{proposition}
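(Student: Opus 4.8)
The plan is to produce the required projection explicitly rather than abstractly. Given $x \in R$, the definition of a generalized Rickart $*$-ring supplies a positive integer $n$ and a projection $g \in R$ with $r(x^n) = gR$. Since $R$ has a unity $1$ by Proposition \ref{s2pr1}, I would then set $e = 1 - g$ and claim this $e$ does the job. The first task is to confirm that $e$ is a projection: self-adjointness follows from $(1-g)^* = 1 - g^* = 1 - g$ (using $1^* = 1$ and $g^* = g$), and idempotency from $(1-g)^2 = 1 - 2g + g^2 = 1 - g$ because $g^2 = g$.

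With $e$ in hand, the two required properties fall out of the annihilator description of $g$. For the identity $x^n e = x^n$, I would first observe that $g = g\cdot 1 \in gR = r(x^n)$, so $x^n g = 0$; consequently $x^n e = x^n(1-g) = x^n - x^n g = x^n$. For the implication, suppose $x^n y = 0$ for some $y \in R$. Then $y \in r(x^n) = gR$, so $y = gz$ for some $z \in R$, whence $ey = (1-g)gz = (g - g^2)z = 0$.

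Honestly, there is no serious obstacle here: the entire content of the proposition is the recognition that $e = 1 - g$ is the correct complementary projection, after which every verification is a one-line computation. The only point deserving a little care is ensuring that the ring genuinely possesses a unity so that $1 - g$ makes sense, which is precisely why Proposition \ref{s2pr1} is proved first and invoked at the outset; everything else is routine algebra in the $*$-ring.
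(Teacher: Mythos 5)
Your proposal is correct and follows essentially the same route as the paper's own proof: take $g$ with $r(x^n)=gR$, set $e=1-g$, and verify the two properties directly from the annihilator description. The only differences are cosmetic — you explicitly justify $x^ng=0$ and check that $1-g$ is a projection, steps the paper leaves implicit.
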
		 
\begin{proof} As $R$ is a  generalized  Rickart $ *$-ring,
	for every $ x\in R, r(x^n)=gR $ for some $ n\in\mathbb{N}$ and for some projection  $g\in R $.
	Let $ e=1-g $, then $x^ne=x^n(1-g)=x^n-x^ng=x^n$. Let $y\in R$ and $ x^ny=0 $. 
  Then	$y\in r(x^n)=gR$, this gives $ y=gy$, that is $(1-g)y=0$. Therefore $ey=0$.
	Thus $ x^ny=0$ implies that $ey=0 $.
\end{proof}

Observe that in the above proposition, the projection $e$ is the smallest projection such that  $ x^ne=x^n$ (see Proposition \ref{s2pr3}).
Now, we introduce the generalized right projection of an element in a $*$-ring.
\begin{definition}  Let $R$ be a $*$-ring and $ x \in R $. The projection $ e \in R $ is said to be generalized right projection of $x$ denoted by $GRP(x)$ if there exists $ n\in\mathbb{N} $ such that  $ x^ne=x^n$; and for $y\in R$ if $x^ny=0$ then $ey=0$. 
	\end{definition}
  Similarly, we introduce the generalized left projection of an element in a $*$-ring.
 \begin{definition}  Let $R$ be a $*$-ring and $ x \in R $. The projection $ f \in R $ is said to be generalized left projection of $x$ denoted by $GLP(x)$ if there exists $ n\in\mathbb{N} $ such that  $ fx^n=x^n$; and for $y\in R$ if $yx^n=0$ then $yf=0$. 
 \end{definition} 
  
 \begin{remark}  \label{s2rm1}	By Proposition \ref{s2pr2}, every element of a generalized Rickart $*$-ring possesses a generalized right projection.
   Similarly, every element of a generalized Rickart $*$-ring possesses a generalized left projection. 	
  \end{remark}
  
\begin{definition}  A $*$-ring $R$ is said to be generalized weakly Rickart $*$-ring  if every $x\in R$ possesses a generalized right projection. That is $GRP(x)$ exists for every $ x \in R $.
\end{definition}
     By Remark \ref{s2rm1}, every generalized Rickart $*$-ring is a generalized weakly Rickart $*$-ring. The following is an example of a $*$-ring which is not a weakly generalized Rickart $*$-ring.
   
\begin{example} \label{s2ex2} Let $R=M_4(\mathbb{Z}_4)$ and $ A=\begin{bmatrix}
		1&1&1&1&\\
		0&0&0&0&\\
		0&0&0&0&\\
		0&0&0&0&
	\end{bmatrix} $, 
	$ B=\begin{bmatrix}
		1&1&1&1&\\
		1&1&1&1&\\
		1&1&1&1&\\
		1&1&1&1&\\
	\end{bmatrix}  \in R$.\\	
Suppose $GRP(A)=E$. Then there exits $n \in \mathbb N$, such that $A^nE=A^n$; and $A^nB=0$ implies $EB=0$. 
That is  $AE=A$; and $AB=0$ implies $EB=0$. 
But $AE=A$ implies $e_{11}+e_{21}+e_{31}+e_{41}=1$. This gives $e_{11}+e_{12}+e_{13}+e_{14} =1 $, a contradiction.
Thus $ M_4(\mathbb{Z}_4) $ is not a generalized weakly Rickart $*$-ring. 
\end{example}

\begin{proposition} \label{s2pr3} Let $R$ be a generalized  Rickart $*$-ring.  
	\begin{enumerate}
		\item   For $x,y \in R$, if $ xy=0 $ then $GRP(x)GLP(y)=0$.
		\item  If $GRP(x)=e$ then $e$ is the smallest projection such that $x^ne=x^n$ for some $n \in \mathbb N$.
	\end{enumerate}
\end{proposition}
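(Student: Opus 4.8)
The plan is to derive both statements directly from the defining implications of $GRP$ and $GLP$, the existence of these projections being guaranteed by Remark \ref{s2rm1}. For (1), set $e=GRP(x)$ and $f=GLP(y)$ and let $n,m\in\mathbb N$ be witnessing integers, so that $x^ne=x^n$ with $x^nz=0\Rightarrow ez=0$, while $fy^m=y^m$ with $zy^m=0\Rightarrow zf=0$. The one observation that makes everything go through is that the single relation $xy=0$ already forces $x^ny^m=0$: since $n,m\ge 1$, I may factor $x^ny^m=x^{n-1}(xy)y^{m-1}=0$. First I would apply the $GRP$ implication with $z=y^m$ to obtain $ey^m=0$; then, using $fy^m=y^m$, I would compute $(ef)y^m=e(fy^m)=ey^m=0$ and apply the $GLP$ implication with $z=ef$ to get $(ef)f=0$. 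Since $f^2=f$ this is $ef=0$, that is, $GRP(x)GLP(y)=0$. There is no real obstacle here; the content is entirely in the factorisation $x^ny^m=x^{n-1}(xy)y^{m-1}$.

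For (2), let $e=GRP(x)$ with witnessing integer $n$. The first step is to notice that the two defining conditions upgrade to an equivalence, $x^nz=0\iff ez=0$: the forward direction is the definition, and conversely $ez=0$ gives $x^nz=x^n(ez)=0$ because $x^n=x^ne$. In annihilator language (as in Proposition \ref{s2pr2}) this says $r(x^n)=(1-e)R$. Now take any projection $g$ with $x^kg=x^k$ for some $k$; I must show $e\le g$, i.e., $e=eg$, equivalently $e(1-g)=0$. Since $e$ itself satisfies $x^ne=x^n$, it belongs to the family of projections fixing a power of $x$, so it suffices to prove it is a lower bound. From $x^kg=x^k$ I get $x^k(1-g)=0$, and when $k\le n$ this gives $x^n(1-g)=x^{n-k}x^k(1-g)=0$; the equivalence above then yields $e(1-g)=0$, hence $e\le g$.

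The step I expect to be the main obstacle is the remaining case $k>n$: there $x^k(1-g)=0$ only places $1-g$ in $r(x^k)$, which a priori merely contains $r(x^n)$, so it need not return $1-g$ to $r(x^n)$. To close this I would reduce to a common power by choosing the witnessing integer $n$ for $GRP(x)$ at a level where the ascending chain $r(x)\subseteq r(x^2)\subseteq\cdots$ has already reached a stable, projection-generated annihilator, so that $r(x^n)=r(x^k)$ for all $k\ge n$; then $x^k(1-g)=0$ again forces $1-g\in r(x^n)$ and the previous argument applies verbatim. With the lower-bound property established in all cases, and $e$ itself fixing $x^n$, it follows that $e$ is the smallest projection with $x^ne=x^n$, as claimed.
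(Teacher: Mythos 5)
Part (1) of your proposal is correct and is essentially the paper's own argument: both proofs reduce $xy=0$ to the vanishing of a product of powers, use the $GRP$-implication to obtain $ey^m=0$, and then the $GLP$-implication to conclude $ef=0$; whether one feeds $z=e$ or $z=ef$ into that last step is immaterial.

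For part (2), your argument in the range $k\le n$ (in particular $k=n$) is exactly the paper's proof, and it is all the proposition actually asserts: here $n$ is the witnessing exponent attached to $GRP(x)=e$, and minimality is claimed only among projections $f$ with $x^nf=x^n$ for that fixed $n$. The extra case $k>n$ that you single out as ``the main obstacle'' is a genuinely stronger statement, and your proposed repair does not work. The definition of a generalized Rickart $*$-ring supplies, for each $x$, only \emph{some} exponent at which $r(x^n)$ is generated by a projection; nothing guarantees that the chain $r(x)\subseteq r(x^2)\subseteq\cdots$ ever becomes stationary, and enlarging the witnessing exponent can change which projection satisfies the definition of $GRP(x)$, so you cannot keep the given $e$ while moving $n$. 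In fact the stronger claim is false: in $R=M_2(\mathbb C)$ (a Rickart, hence generalized Rickart, $*$-ring) take $x=E_{12}$. Then $e=E_{22}$ satisfies the definition of $GRP(x)$ with witness $n=1$, since $xE_{22}=x$ and $xy=0$ forces $E_{22}y=0$; yet $x^2=0$, so the projection $g=0$ satisfies $x^2g=x^2$ while $E_{22}\nleq 0$. Thus $e$ need not be minimal among projections fixing \emph{some} power of $x$; the proposition must be read with $n$ fixed to the witnessing exponent, and under that reading your $k\le n$ argument (which coincides with the paper's) already completes the proof.
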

\begin{proof} (1):  Let $x,y\in R $ and $GRP(x)=e,~ GLP(y)=f$.
  Then for some $n\in\mathbb{N} $, $x^ne=x^n$; and $x^nz=0$ implies $ez=0 $.
	Also, for some $m\in\mathbb{N}$, $fy^m=y^m$; and $zy^m=0$ implies $zf=0$.
	Suppose $xy=0$. Then $x^{n-1} xy=0$ implies $x^ny=0$. This gives $ey=0$.
	Therefore $eyy^{m-1}=0$ implies $ey^m=0$. Hence $ef=0$. That is $GRP(x) GLP(y)=0$.\\
	(2): Let $f$ be a projection such that $x^nf=x^n$. Then $x^n(e-f)=x^ne-x^nf=0$. This gives 
	$e(e-f)=0$, that is $e=ef$. Hence $e\leq f$.
	Therefore $e$ is the smallest projection such that $x^ne=x^n$. 
 \end{proof}
 	
    The converse of Proposition \ref{s2pr3} (1) is not true. 
    
    \begin{example} Let $R=\mathbb{Z}_{12} $. Observe that $0, 1, 4, 9$ are the only projections in $R$.
	Let $ x=2,~y=3 \in R$. Since $2^2\cdot  4=4=2^2$, we have $GRP(2)=4$. 
	Also, $9\cdot 3^2=9=3^2$, gives $GLP(3)=9$. Therefore $GRP (2) GLP (3)=4 \cdot 9=0$, but $2 \cdot3=6 \ne 0$. Thus $GRP(x)~GLP(y)=0$ but $xy\ne0$. 
  \end{example}

In the following result, we provide a condition under which a $*$-subring of a generalized Rickart $*$-ring becomes a generalized Rickart $*$-ring.

\begin{proposition} \label{s2pr4} 	Let $R$ be a generalized  Rickart $ *$-ring and $B$ be a $*$-subring of $R$        satisfying the following conditions, 
	\begin{enumerate}
		\item $B$ has unity element 
		\item if $x\in B$ then $GRP(x)\in B$.
	\end{enumerate}
	Then $B$ is a generalized  Rickart $*$-ring.
\end{proposition}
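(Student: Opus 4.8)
The plan is to verify the defining condition of a generalized Rickart $*$-ring directly for $B$: for each $x\in B$ I must produce $n\in\mathbb N$ and a projection $g\in B$ with $r_B(x^n)=gB$, where $r_B(\cdot)$ denotes the right annihilator taken inside $B$. Let $1$ denote the unity of $B$ supplied by condition (1); recall that the unity of any $*$-ring is self-adjoint, so $1^*=1$.

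First I would fix $x\in B$ and use the hypothesis that $R$ is a generalized Rickart $*$-ring: by Remark \ref{s2rm1} (equivalently Proposition \ref{s2pr2}) the element $x$ has a generalized right projection $e=GRP(x)$ in $R$, so there is an $n\in\mathbb N$ with $x^ne=x^n$ and with the property that $x^ny=0$ forces $ey=0$ for every $y\in R$. Condition (2) then places $e$ inside $B$. Since $e\in B$ and $1$ is the unity of $B$ we have $1e=e1=e$, so $g:=1-e$ is again self-adjoint and idempotent, i.e.\ a projection lying in $B$.

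Next I would show $r_B(x^n)=gB$. For the inclusion $gB\subseteq r_B(x^n)$, note $x^ng=x^n(1-e)=x^n1-x^ne=x^n-x^n=0$, where $x^n1=x^n$ because $x^n\in B$ and $1$ is the unity of $B$; hence $x^n(gb)=0$ for every $b\in B$. For the reverse inclusion, take $y\in B$ with $x^ny=0$. The defining implication of $GRP(x)$, applied to $y\in B\subseteq R$, yields $ey=0$, whence $y=1y=(g+e)y=gy+ey=gy\in gB$. Thus $r_B(x^n)=gB$ with $g$ a projection of $B$, and since this holds for every $x\in B$, the ring $B$ is a generalized Rickart $*$-ring.

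The step requiring the most care is the reverse inclusion, since it is exactly where the strength of the $GRP$ hypothesis is needed: an arbitrary subring could contain elements annihilating $x^n$ from the right that are not captured by $(1-e)B$, and it is the implication $x^ny=0\Rightarrow ey=0$, inherited from $R$ and valid for the restricted range $y\in B$, that rules this out. A secondary point to keep straight is that all identity-dependent manipulations ($x^n1=x^n$, the decomposition $1=g+e$, and the idempotency of $g$) must be carried out with the unity of $B$ furnished by condition (1), which is precisely why that hypothesis is imposed rather than derived.
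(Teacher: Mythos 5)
Your proof is correct and follows essentially the same route as the paper's: take $e=GRP(x)$ in $R$, use condition (2) to place $e$ in $B$, and verify $r_B(x^n)=(1-e)B$ via the two defining properties of the generalized right projection. Your write-up is in fact slightly more careful than the paper's on the points that the unity of $B$ is self-adjoint and that $1-e$ is a projection of $B$.
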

\begin{proof} Let $x\in B$ and $GRP(x)=e$. Therefore for some $ n\in\mathbb N $, $x^ne=x^n$; and $x^ny=0$ implies $ey=0$.	Since $e\in B$. for $y\in B,~ x^ny=0$ if and only if $ey=0$ if and only if $y=y-ey$ if and only if 
	$y=(1-e)y$. Therefore $r(x^n)=(1-e)B$. Hence $B$ is a generalized  Rickart $*$-ring. 
\end{proof}

Let $R$ be a ring and $S$ be a nonempty subset of $R$, the
commutant of $S$ in $R$, denoted $S'$, is the set of elements of $R$ that commute
with every element of $S$, that is $S'=\{ x \in R ~:~ xs=sx,$ for all $s\in S\}$.
	We write $S''=(S')'$, called the bicommutant of $S$.
	
	In the following result, we provide a condition under which a $*$-subring of a generalized Rickart $*$-ring becomes a generalized Rickart $*$-ring, and generalized right projection of every element in the $*$-subring remains within it. 
	
\begin{proposition} \label{s2pr5} Let $R$ be a generalized  Rickart $*$-ring and 
	$B$ be a $*$-subring of $R$ such that $ B=B''$. Then 
	\begin{enumerate}
		\item  $x\in B$ implies $GRP(x)\in B$.
		\item  $B$ is a generalized  Rickart $*$-ring.
	\end{enumerate}
\end{proposition}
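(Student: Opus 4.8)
The plan is to prove (1) first, since part (2) will then follow almost immediately from Proposition \ref{s2pr4}. Fix $x \in B$ and set $e = GRP(x)$, which exists by Remark \ref{s2rm1} because $R$ is a generalized Rickart $*$-ring. Thus there is an $n \in \mathbb{N}$ with $x^n e = x^n$ and $x^n y = 0 \Rightarrow ey = 0$; since moreover $x^n(1-e) = x^n - x^n e = 0$, the usual argument (as in Proposition \ref{s2pr2}) identifies $r(x^n) = (1-e)R$. To conclude that $e \in B'' = B$, I will show that $e$ commutes with every element of the commutant $B'$.

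First I would record that $B'$ is itself a $*$-subring: if $s \in B'$, then for each $b \in B$ we have $b^* \in B$, so $s b^* = b^* s$, and taking adjoints gives $b s^* = s^* b$; hence $s^* \in B'$. Now fix $s \in B'$. Since $x \in B$, the element $s$ commutes with $x$ and therefore with $x^n$. The key computation is $x^n s(1-e) = s\, x^n (1-e) = 0$, so $s(1-e) \in r(x^n) = (1-e)R$, and the annihilator property of $e$ yields $e\, s(1-e) = 0$, that is, $es = ese$.

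To obtain the reverse relation I would apply the very same computation to $s^* \in B'$, getting $e\, s^*(1-e) = 0$; taking adjoints and using $e^* = e$ and $(1-e)^* = 1-e$ gives $(1-e) s e = 0$, that is, $se = ese$. Combining the two identities yields $es = ese = se$, so $e$ commutes with every $s \in B'$, whence $e \in B'' = B$. This establishes (1).

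For (2), note that $R$ has a unity by Proposition \ref{s2pr1}, and since $1$ commutes with every element of $B'$ we have $1 \in B'' = B$, so $B$ has a unity element. Together with (1), the two hypotheses of Proposition \ref{s2pr4} are met, and therefore $B$ is a generalized Rickart $*$-ring. The only step requiring genuine care is the symmetric (left-hand) identity $se = ese$, which is precisely where the $*$-closure of $B'$ and the self-adjointness of $e$ and $1-e$ enter; the remaining manipulations are routine uses of the annihilator identity $r(x^n) = (1-e)R$.
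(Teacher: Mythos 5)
Your proof is correct and follows essentially the same route as the paper: the identity $x^n s(1-e)=0$ (equivalently $x^n(s-se)=0$) yields $es=ese$, the adjoint trick applied to $s^*$ gives $se=ese$, and part (2) reduces to Proposition \ref{s2pr4} after observing that the unity lies in $B''=B$. Your explicit check that $B'$ is $*$-closed is a point the paper leaves implicit, but the argument is the same.
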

\begin{proof} (1): Suppose $x\in B$ thus $x\in R$. Let $ GRP(x)=e$. Therefore there exists $n \in \mathbb N$ such that $x^ne=x^n$; and for $y\in R$ $x^ny=0$ implies $ey=0$. To prove $e\in B=B''=(B')'$, it is enough to show $ey=ye$ for all $y\in B'$.
	Now $xy=yx $  gives $x^ny=yx^n$. Therefore $x^n(y-ye)=x^ny-x^nye=x^ny-yx^ne=x^ny-yx^n=0$.
	This implies $e(y-ye)=0$ and hence $ey=eye$. Replace $y$ by $ y^*$. So $ey^*=ey^*e$, that is $ye=eye=ey$.
	Therefore $ey=ye$ for all $y\in B'$. Hence  $e\in B$, that is $GRP(x)\in B$.\\
    (2): We know for any nonempty subset $S$ of a ring $R$, $1\in S'$ and $ S'= S'''$.
    Since $B=B''$ is equivalent to $B=S'$ for some $*$-subset $S$ of $R$, we have $1\in B$.	
	By (1), $x\in B$ implies $GRP(x)\in B $. By Proposition \ref{s2pr4}, $B$ is a generalized  Rickart $ *$-ring.
\end{proof}

\begin{proposition} \label{s2pr6} Let $R$ be a generalized  Rickart $ *$-ring and $x\in R$. Then there exists $n\in\mathbb N$ such that $r(x^n)=(1-GRP(x))R$.
\end{proposition}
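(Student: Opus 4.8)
The plan is to take $e=GRP(x)$ and let $n\in\mathbb{N}$ be the positive integer supplied by the definition of the generalized right projection, so that simultaneously $x^ne=x^n$ and, for every $y\in R$, $x^ny=0$ implies $ey=0$. This is precisely the content recorded in Proposition \ref{s2pr2}. With this $n$ fixed, I would prove the set equality $r(x^n)=(1-e)R$ by establishing the two inclusions separately.

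First I would verify $(1-e)R\subseteq r(x^n)$. Given an arbitrary element $(1-e)z$ of $(1-e)R$, a direct computation using $x^ne=x^n$ gives $x^n(1-e)z=(x^n-x^ne)z=(x^n-x^n)z=0$, so $(1-e)z$ annihilates $x^n$ on the right. This inclusion uses only the first defining property of $GRP(x)$.

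For the reverse inclusion $r(x^n)\subseteq(1-e)R$, I would take any $y$ with $x^ny=0$. The second defining property of $GRP(x)$ then forces $ey=0$, whence $y=y-ey=(1-e)y\in(1-e)R$. Combining both inclusions yields $r(x^n)=(1-e)R=(1-GRP(x))R$, which is the desired conclusion.

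There is no substantial obstacle here: the result is essentially a reformulation of the definition of $GRP(x)$ together with the fact, established in Proposition \ref{s2pr2}, that the projection $1-g$ arising from $r(x^n)=gR$ is exactly the generalized right projection. The only point meriting attention is that a single $n$ must serve both conditions at once, and this is guaranteed by the definition of the generalized right projection and by Proposition \ref{s2pr3}(2), which identifies $e$ as the smallest projection with $x^ne=x^n$.
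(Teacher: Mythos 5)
Your proof is correct and follows essentially the same route as the paper's: fix $e=GRP(x)$ with the $n$ from its definition and verify the two inclusions $r(x^n)\subseteq(1-e)R$ and $(1-e)R\subseteq r(x^n)$ directly from the two defining properties of the generalized right projection. The closing appeal to Proposition \ref{s2pr3}(2) is unnecessary but harmless.
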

\begin{proof}  Let $x\in R$ and $GRP(x)=e$. Therefore there exist $n \in \mathbb N$ such that 
	$x^ne=x^n$; and for $z \in R$, $x^nz=0$ implies $ez=0$.
	We prove that $r(x^n)=(1-e)R$. Let $y\in r(x^n)$, then $x^ny=0$, and hence $ey=0$. 
	Therefore $y=y-ey=(1-e)y\in(1-e)R$. Hence $r(x^n)\subseteq  (1-e) R$. 
	Let $w\in(1-e)R$, then $w=(1-e)w=w-ew$. Hence $ew=0$ implies $x^new=0$. Which gives $x^nw=0$, and hence  $w\in r(x^n)$. Therefore $(1-e)R\subseteq r(x^n)$. Thus $r(x^n)=(1-e)R=(1-GRP(x))R$.
\end{proof}

In the following result, we provide a characterization of a generalized Rickart $*$-ring 

\begin{proposition} \label{s2pr7} A $*$-ring $R$ is generalized Rickart $*$-ring if and only if $R$ has unity element and
		for each $x\in R$ there exists a projection $e$ such that $r(x^n)=r(e)$ for some $n\in \mathbb N$.	
\end{proposition}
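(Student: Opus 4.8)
The plan is to reduce everything to the elementary identity that for a projection $e$ in a ring with unity the right annihilator $r(e)$ coincides with the principal right ideal $(1-e)R$, and then to translate the defining condition of a generalized Rickart $*$-ring into this language. Verifying $r(e)=(1-e)R$ is the one computation underlying the whole argument: if $ea=0$ then $a=(1-e)a\in(1-e)R$, and conversely if $a=(1-e)b$ then $ea=e(1-e)b=0$, so the two sets are literally equal. Since $1-e$ is again a projection, this simultaneously shows that every principal right ideal generated by a projection is the annihilator of a projection, and vice versa; this is the bridge between the two formulations.

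For the forward implication, I would suppose $R$ is a generalized Rickart $*$-ring. By Proposition \ref{s2pr1} it has a unity element, so the identity above is available. Given $x\in R$, Proposition \ref{s2pr6} produces $n\in\mathbb N$ with $r(x^n)=(1-GRP(x))R$; setting $e=GRP(x)$, which is a projection, the identity gives $(1-e)R=r(e)$, hence $r(x^n)=r(e)$, exactly as required.

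For the converse, I would assume $R$ has a unity and that for each $x\in R$ there is a projection $e$ and some $n$ with $r(x^n)=r(e)$. Applying the identity to $e$ yields $r(x^n)=r(e)=(1-e)R$, and since $1-e$ is a projection this exhibits $r(x^n)$ as a principal right ideal generated by a projection, which is precisely the definition of a generalized Rickart $*$-ring.

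I do not expect a serious obstacle: the content is entirely in the set-equality $r(e)=(1-e)R$, after which both directions are bookkeeping. The only point requiring care is that the unity element is genuinely needed to form $1-e$, which is why Proposition \ref{s2pr1} must be invoked in the forward direction and is imposed as a hypothesis in the converse.
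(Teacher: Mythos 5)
Your proposal is correct and follows essentially the same route as the paper: the converse is verbatim the paper's argument via $r(e)=(1-e)R$, and your forward direction merely routes through Proposition \ref{s2pr6} and the same identity, whereas the paper deduces $r(x^n)=r(e)$ directly from the biconditional $x^ny=0\Leftrightarrow ey=0$ defining $GRP(x)$. Both are sound and the difference is only bookkeeping.
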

\begin{proof} Suppose $R$ is a generalized  Rickart $*$-ring.
Let $x\in R$ and $GRP(x)=e$. Therefore for some $ n\in\mathbb N $, $x^ne=x^n$; and $x^ny=0$ if and only if  $ey=0$. Hence $y\in  r(x^n)$ if and only if $y\in r(e)$ for some $n\in \mathbb N$. Thus $r(x^n)=r(e)$. 
	Conversely, suppose $R$ has unity element and
	for each $x\in R$ there exists a projection $e$ such that $r(x^n)=r(e)$ for some $n\in \mathbb N$.
	Therefore $r(x^n)=r(e)=(1-e)R$ for some $n\in \mathbb N$.
	Therefore for any $x\in R$, there exists a projection $1-e$ such that  
	$r(x^n)=(1-e)R$ for some $n\in \mathbb N$. Hence $R$ is a generalized  Rickart $*$-ring. 
\end{proof}

 Following is a characterization of a generalized Rickart $*$-ring in terms of generalized weakly Rickart $*$-rings.

\begin{proposition} \label{s2pr8} The following conditions on a $*$-ring $R$ are equivalent.
	\begin{enumerate}
		\item $R$ is a generalized  Rickart $*$-ring.  
		\item $R$ is a generalized weakly Rickart $*$-ring with unity.
	\end{enumerate}
\end{proposition}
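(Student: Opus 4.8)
The plan is to prove the two implications separately, leveraging the definitional machinery built up earlier in the section. The forward direction $(1)\Rightarrow(2)$ is essentially immediate: if $R$ is a generalized Rickart $*$-ring, then by Proposition \ref{s2pr1} it has a unity element, and by Remark \ref{s2rm1} (which itself rests on Proposition \ref{s2pr2}) every element of $R$ possesses a generalized right projection. By the definition of a generalized weakly Rickart $*$-ring, the latter condition is exactly what is required. So $R$ is a generalized weakly Rickart $*$-ring with unity, and nothing further is needed here.

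For the converse $(2)\Rightarrow(1)$, I would start from an arbitrary $x\in R$ and let $e=GRP(x)$, which exists by the generalized weakly Rickart hypothesis. Unwinding the definition of $GRP$, there is an $n\in\mathbb{N}$ with $x^ne=x^n$, and for every $y\in R$ the implication $x^ny=0\Rightarrow ey=0$ holds. The goal is to show that $r(x^n)$ is generated by a projection; the natural candidate is $1-e$, which requires the unity element to even make sense—this is where hypothesis (2)'s assumption of unity is used. The argument then reduces to showing $r(x^n)=(1-e)R$, which is precisely the content of the computation carried out in the proof of Proposition \ref{s2pr6}.

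Concretely, I would verify the two inclusions. For $r(x^n)\subseteq(1-e)R$: take $y\in r(x^n)$, so $x^ny=0$; the $GRP$ property gives $ey=0$, whence $y=(1-e)y\in(1-e)R$. For the reverse inclusion $(1-e)R\subseteq r(x^n)$: take $w=(1-e)w$, so $ew=0$; then $x^nw=x^new=0$ using $x^ne=x^n$, so $w\in r(x^n)$. Since $1-e$ is a projection, this exhibits $r(x^n)$ as generated by a projection, and as $x$ was arbitrary, $R$ is a generalized Rickart $*$-ring.

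I do not expect any serious obstacle here, since the essential computation already appears as Proposition \ref{s2pr6} and the forward direction is a direct assembly of Proposition \ref{s2pr1} and Remark \ref{s2rm1}. The only point demanding care is making explicit that the unity hypothesis in (2) is genuinely needed—without it, forming $1-e$ and concluding $r(x^n)=(1-e)R$ is not available—so the proof should flag where unity enters. In fact the cleanest writeup would simply cite Proposition \ref{s2pr6} for the converse rather than reproducing the inclusions, since that proposition already establishes $r(x^n)=(1-GRP(x))R$ for every $x$ in a ring where the relevant $GRP$'s exist.
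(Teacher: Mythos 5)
Your proposal is correct and follows essentially the same route as the paper: the forward direction assembles Proposition \ref{s2pr1} and Proposition \ref{s2pr2} (via Remark \ref{s2rm1}), and the converse verifies $r(x^n)=(1-GRP(x))R$ by the same two inclusions the paper relies on. One small caution: Proposition \ref{s2pr6} is \emph{stated} for generalized Rickart $*$-rings, so citing it verbatim in the converse would be circular; your choice to reproduce the inclusion argument directly (which uses only the existence of $GRP(x)$ and the unity element) is the right way to present it.
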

\begin{proof} Suppose $R$ is a generalized Rickart $*$-ring.
	By Proposition \ref{s2pr1}, $R$ has unity element. Also, by Proposition \ref{s2pr2}, $R$ is a generalized weakly Rickart $*$-ring. Therefore $R$ is a generalized weakly Rickart $*$-ring with unity.
	Conversely suppose $R$ is a generalized weakly Rickart $*$-ring with unity.
	Let $x\in R$. Then $GRP(x)$ exists in $R$. Therefore $r(x^n)=(1-GRP(x)) R$ for some $n \in \mathbb N$.
	Hence $R$ is a generalized  Rickart $*$-ring.
\end{proof}

Following is the example of weakly generalized Rickart $*$-ring which is not generalized Rickart $*$-ring.

\begin{example}
Let $R=\{x=(x_1,x_2,\cdots )~|~ x_i\in \mathbb{C} ,~ for ~each ~x,~ there ~ exists~ m\in \mathbb{N} ~such ~that~ x_k=0 ~for ~all~ k >  m\}$.
Then $R$ is a ring with component-wise operations multiplication and addition. Define the involution $*$ on $R$ as, for $x=(x_1,x_2,\cdots )\in R,~ x^*=(\bar{x_1},\bar{x_2},\cdots )$. Clearly $e=(e_1,e_2,\cdots )\in R $ is a projection if $e_i\in \{0,1\}$. Also, the unity element is $u=(1,1,\cdots )$ and $u\notin R$ . Let $x=(x_1,x_2,\cdots )\in R$. Then there  exists $m\in \mathbb{N}$ such that $x_k=0$ for all $k >  m$. Let us find $n\in \mathbb{N}$ and projection $e \in R$ such that $x^ne=x^n$ and $x^ny=0$ implies $ey=0$. Choose $n=1$ and define $e=(e_1,e_2,\cdots )$ as $e_i=1$ if $x_i\neq 0$ and $e_i=0$ if $x_i= 0$. As $x$ has only finitely many non-zero components, $e$ also has finitely many non-zero components. So  $e \in R$. Further, $e_i\in \{0,1\}$ and $e_i=\bar{e_i}$, thus $e$ is a projection in $R$. For each component $(xe)_i=x_ie_i$. If $x_i\neq 0$ then $e_i=1$, so $(xe)_i=x_i\cdot 1=x_i$. If $x_i= 0$ then $e_i=0$, so $(xe)_i=0=x_i$. Thus, $xe=x$. Now suppose $xy=0$. This means $x_iy_i=0$ for all $i$. If $x_i\neq 0$ then $y_i$ must be 0. If $x_i= 0$ then $y_i$ can be anything. We have $(ey)_i=e_iy_i$. Thus, If $x_i\neq 0$ then $e_i=1$. so $(ey)_i= 1\cdot 0=0$. If $x_i= 0$ then $e_i=0$ and hence $(ey)_i= 0$. Therefore, $ey=0$. Thus $R$ is a weakly generalized Rickart $*$-ring.
 To show $R$ is not a generalized Rickart $*$-ring. We will find $x\in R$ such that for any $n\in \mathbb{N},~ r(x^n)\neq eR$ for any projection $e\in R$. Let $x=(0,1,0,0, \cdots)\in R$. We have $x^n=x$ for any $n\geq 1$. Let us find $r(x)$. Suppose $y=(y_1,y_2, \cdots )\in r(x)$. Hence, $xy=0$. Therefore $(0,1,0,0, \cdots)\cdot (y_1,y_2, \cdots )=(0,0, \cdots )$. This gives $y_2=0$. Thus, $r(x)=\{(y_1,0,y_3,y_4, \cdots )~|~y_i\in \mathbb{C}\}$. Suppose  $r(x)=eR$ for some projection $e \in R$. Let $e_k=0$ for $k>n_0$. If $y\in r(x)$, then  $y\in eR$, thus $y_i=e_iz_i$ for some $z=(z_1,z_2, \cdots )\in R$. As $y_2=0$ for any $y\in r(x)$. So $(ez)_2=e_2z_2=0$. Hence $e_2=0$. Also, for $w=(1,0,0,\cdots )\in r(x)$, we have $e_1=1$. Similarly, $e_3=1$. Therefore $e=(1,0,1,1,1, \cdots )$. This contradicts the fact that $e$ has only finitely many non-zero components. Hence,  $r(x^n)\neq eR$ for any projection $e\in R$  and for any $n\in \mathbb{N}$. Thus, $R$ is not a generalized Rickart $*$-ring.
\end{example}

In the following result, we prove that the center of a generalized weakly Rickart $*$-ring is itself a generalized weakly Rickart $*$-ring.
 
\begin{proposition} \label{s2pr9} The center of a generalized weakly Rickart $*$-ring is generalized weakly Rickart $*$-ring.
\end{proposition}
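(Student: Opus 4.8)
The plan is to show that for each $x$ in the center $Z(R)$, the generalized right projection of $x$ computed in the ambient ring $R$ already lies in $Z(R)$ and continues to serve as a generalized right projection when the defining quantifier is restricted to $Z(R)$. As a preliminary I would record that $Z(R)$ is a $*$-subring of $R$: it is visibly a subring, and if $z\in Z(R)$ then applying $*$ to the identity $zr=rz$ (valid for all $r\in R$) gives $r^*z^*=z^*r^*$; since $*$ is a bijection of $R$, $r^*$ ranges over all of $R$, so $z^*\in Z(R)$. Thus the notion of generalized right projection makes sense inside $Z(R)$.

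Now fix $x\in Z(R)$. Since $R$ is a generalized weakly Rickart $*$-ring, $GRP(x)=e$ exists in $R$: there is $n\in\mathbb N$ with $x^ne=x^n$, and $x^ny=0$ implies $ey=0$ for all $y\in R$. The central step, and the main point of the argument, is to prove that $e$ is central. Because $x$ is central, $x^ny=yx^n$ for every $y\in R$; hence $x^n(y-ye)=x^ny-x^nye=yx^n-yx^ne=yx^n-yx^n=0$, where the last equality uses $x^ne=x^n$. By the defining property of $GRP(x)$ this forces $e(y-ye)=0$, that is $ey=eye$. Replacing $y$ by $y^*$ yields $ey^*=ey^*e$, and applying the involution gives $ye=eye$. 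Combining the two relations, $ey=eye=ye$ for all $y\in R$, so $e\in Z(R)$.

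Finally I would verify that $e=GRP(x)$ when the definition is read inside $Z(R)$. The element $e$ is a projection lying in $Z(R)$, the relation $x^ne=x^n$ holds with the same exponent $n$, and for any $y\in Z(R)\subseteq R$ the implication $x^ny=0\Rightarrow ey=0$ is inherited directly from the corresponding statement in $R$. Hence $GRP(x)$ exists in $Z(R)$ for every $x\in Z(R)$, which is precisely the assertion that $Z(R)$ is a generalized weakly Rickart $*$-ring.

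The only genuine obstacle is the centrality argument in the second paragraph; everything else is routine bookkeeping. That argument is a mild strengthening of the commutant computation used in Proposition \ref{s2pr5}(1), made simpler here by the fact that $x$ commutes with all of $R$ rather than merely with the elements of a commutant, so one obtains $ey=ye$ for every $y\in R$ at once.
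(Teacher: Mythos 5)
Your proposal is correct and follows essentially the same route as the paper: both arguments show that the generalized right projection $e$ of a central element $x$, computed in $R$, is itself central and hence witnesses $GRP(x)$ inside the center. The only (immaterial) difference is in the second half of the centrality check: you obtain $ye=eye$ by substituting $y^*$ into $ey=eye$ and applying the involution, whereas the paper derives it from the adjoint annihilator condition $(r-er)(x^n)^*=0\Rightarrow(r-er)e=0$; your version is, if anything, slightly cleaner and matches the technique used in Proposition \ref{s2pr5}.
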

\begin{proof} Suppose $R$ is a generalized weakly Rickart $ *$-ring.
	Let $C(R)$ denote the center of $R$ and $x \in C(R) $.
	We will prove that $GRP(x)$ exists in $C(R)$.
	Since $x\in R$ and $R$ is a generalized weakly Rickart $*$-ring. 
	Therefore $GRP(x)=e$ exists in $R$. That is there exist $n \in \mathbb N$ such that $x^ne=x^n$; and $x^ny=0$ implies $ey=0 $. Hence there exist $n \in \mathbb N$ such that $e(x^n)^* =(x^n)^* $; and $y^*(x^n)^*=0$ implies $y^*e=0$.
	Since $x \in c(R)$, we have $x^n(r-re)=x^nr-x^nre=x^nr-rx^ne=x^nr-rx^n
	=x^nr-x^nr=0$. Therefore $e(r-re)=0$, that is $er-ere=0$, which gives $er=ere$.
	Also, $(r-er)(x^n)^*=r(x^n)^*-er(x^n)^*=r(x^n)^*-e(x^n)^*r=r(x^n)^*-(x^n)^*r=r(x^n)^*-r(x^n)^*=0$.
	Hence $(r-er)e=0$ implies $re=ere$.	Therefore $er=re$ for all $ r\in R $.
	Hence $e\in C(R)$, that is $ GRP(x)=e\in C(R)$. 
	Thus $C(R) $ is a generalized weakly Rickart $*$-ring.
\end{proof}	

  The involution $*$ of a ring $R$ is called {\it weakly proper} if for any $ x\in R $, $ xx^*=0$ implies $x^n=0$ for some $ n\in\mathbb N$.

 \begin{proposition} \label{s2pr10} Let $R$ be a generalized weakly Rickart $*$-ring. Then,\\
	\begin{enumerate}
		\item for each $ x\in R$ there exist $n \in \mathbb N$ such that $r(x^n) \cap(x^*)^nR=\{0\}$.
		\item the involution $*$ is weakly proper.
	\end{enumerate}
\end{proposition}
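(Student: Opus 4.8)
The plan is to derive both statements directly from the defining property of the generalized right projection, taking care not to invoke a unity element, which a generalized weakly Rickart $*$-ring need not possess. Fix $x\in R$ and let $e=GRP(x)$, so that there is an $n\in\mathbb{N}$ with $x^ne=x^n$, together with the implication: for all $y\in R$, $x^ny=0$ forces $ey=0$. Both parts will hinge on this $n$ and on transferring information between $x^n$ and $(x^*)^n$ via adjoints, using $e^*=e$ and the identity $((x^*)^n)^*=x^n$.

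For (1), I would take an arbitrary $z\in r(x^n)\cap (x^*)^nR$ and show $z=0$. Since $z\in r(x^n)$ we have $x^nz=0$, so the defining property of $e$ gives $ez=0$; passing to adjoints yields $z^*e=0$. On the other hand, writing $z=(x^*)^nw$ for some $w\in R$ and using $((x^*)^n)^*=x^n$ together with $x^n=x^ne$, I obtain $z^*=w^*x^n=w^*x^ne=z^*e=0$, whence $z=0$. The only delicate point is that we cannot conclude $(x^*)^n\in(x^*)^nR$ for free (no unity), so the argument is routed through the relation $x^n=x^ne$ rather than through any direct membership.

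For (2), suppose $xx^*=0$. The key observation is that this annihilation propagates to the $n$-th powers: $x^n(x^*)^n=x^{n-1}(xx^*)(x^*)^{n-1}=0$. Applying the defining property of $e=GRP(x)$ with $y=(x^*)^n$ then gives $e(x^*)^n=0$, and taking adjoints (again $e^*=e$ and $((x^*)^n)^*=x^n$) yields $x^ne=0$. Since $x^ne=x^n$, we conclude $x^n=0$, which is exactly the weakly proper condition for this $x$.

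The main, and fairly mild, obstacle throughout is the possible absence of a unity element, which blocks the tempting shortcut of deducing (2) from (1) via the trivial containment $(x^*)^n\in(x^*)^nR$. Both parts are therefore handled self-contained from the projection relation $x^n=x^ne$ and the annihilator implication, with the adjoint operation doing the work of moving between $x^n$ and $(x^*)^n$.
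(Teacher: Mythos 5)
Your proof is correct and follows essentially the same route as the paper's: both parts are derived directly from the relation $x^n=x^ne$ and the annihilator implication for $e=GRP(x)$, with the adjoint transferring between $x^n$ and $(x^*)^n$ (the paper writes $y=ey=0$ in part (1) where you write the adjoint statement $z^*=z^*e=0$, which is the same computation). Your remark about not shortcutting (2) via (1) in the absence of unity is a sound observation but does not alter the argument.
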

\begin{proof} (1): Let $x \in R$. Since $R$ is a generalized weakly Rickart $*$-ring, 
	 there exists $n\in\mathbb N$ and a projection $e \in R$ such that $x^ne=x^n$; and for $y \in R$, $x^ny=0$ implies $ey=0$. We prove that $r(x^n) \cap (x^*)^nR=\{0\}$.
	Let $y\in r(x^n)\cap (x^*)^nR$.
	Therefore $x^ny=0$ and $y=(x^*)^ns$ for some $ s\in R$.
	This gives $y=(x^*)^ns=(x^n)^*s=(x^ne)^*s=e(x^n)^*s=e(x^*)^ns=ey=0$.
	Hence $r(x^n)\cap (x^*)^nR=\{0\}$. \\
	(2): Let $ xx^*=0$. Therefore	
	 $x^n(x^*)^n=0$. This gives $e(x^*)^n=0$.
	Hence $e(x^n)^*=0$ implies $(x^ne)^*=0$, and this gives $(x^n)^*=0$. Therefore $x^n=0$.
\end{proof}

\begin{corollary}[\cite{ahmadigen2021}, Proposition 2.11] Let $R$ be a generalized Rickart $*$-ring. Then\\
	(i) for each $x\in R$, there exists an integer $n \geq 1$ such that $r(x^n) \cap (x^*)^nR = 0 $;
	(ii) the involution $*$ is weakly proper.
\end{corollary}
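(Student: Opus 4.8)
The plan is to deduce this corollary directly from Proposition \ref{s2pr10}, which already establishes both conclusions for the a priori larger class of generalized weakly Rickart $*$-rings. The only thing one needs to verify is that a generalized Rickart $*$-ring belongs to that class, after which (i) and (ii) are immediate specializations.

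First I would invoke Remark \ref{s2rm1} (equivalently, Proposition \ref{s2pr2}, which furnishes the generalized right projection via $e = 1 - g$ where $r(x^n) = gR$): every element $x$ of a generalized Rickart $*$-ring $R$ possesses a generalized right projection $GRP(x)$. By the definition of a generalized weakly Rickart $*$-ring, this shows that $R$ is a generalized weakly Rickart $*$-ring. One may equally cite Proposition \ref{s2pr8}, which records the equivalence between generalized Rickart $*$-rings and generalized weakly Rickart $*$-rings with unity.

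Having secured the class inclusion, I would apply Proposition \ref{s2pr10}(1) to obtain, for each $x \in R$, an integer $n \geq 1$ with $r(x^n) \cap (x^*)^n R = \{0\}$, which is exactly statement (i); and Proposition \ref{s2pr10}(2) to conclude that the involution $*$ is weakly proper, which is statement (ii). There is no genuine obstacle here, since all the content resides in Proposition \ref{s2pr10} together with the passage from $GRP$-existence to the annihilator condition $r(x^n) = (1 - GRP(x))R$ of Proposition \ref{s2pr6}. The only point needing mild care is to keep the exponent $n$ and the projection $e = GRP(x)$ matched when reading off the intersection $r(x^n) \cap (x^*)^n R = \{0\}$, but this bookkeeping is carried out verbatim inside the proof of Proposition \ref{s2pr10}, so the corollary reduces to a one-line specialization.
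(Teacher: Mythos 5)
Your proposal is correct and matches the paper's intent exactly: the corollary is stated without proof precisely because it is the specialization of Proposition \ref{s2pr10} to generalized Rickart $*$-rings, which lie in the class of generalized weakly Rickart $*$-rings by Remark \ref{s2rm1}. Nothing further is needed.
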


The following result provides the characterization of a generalized weakly Rickart $*$-ring.

\begin{proposition} \label{s2pr11} The following conditions on a $*$-ring $R$ are equivalent.
	\begin{enumerate}
		\item $R$ is generalized weakly Rickart $*$-ring. 
		\item Involution $*$ is weakly proper and for every $ x\in R$ there exist
		 $n\in\mathbb N$ and a projection $e$ in $R$ such that $r(x^n)=r(e)$.		
	\end{enumerate}
\end{proposition}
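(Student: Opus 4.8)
The plan is to prove the two implications separately, with the forward direction routine and the reverse direction carrying the real content.

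For $(1)\Rightarrow(2)$, assume $R$ is generalized weakly Rickart, so each $x$ has $GRP(x)=e$: for some $n$, $x^ne=x^n$ and $x^ny=0\Rightarrow ey=0$. I would read off $r(x^n)=r(e)$ directly: the inclusion $r(x^n)\subseteq r(e)$ is exactly the second $GRP$ property, while if $ey=0$ then $x^ny=(x^ne)y=x^n(ey)=0$, giving the reverse inclusion. Weak properness of the involution is already available from Proposition \ref{s2pr10}(2). This settles the forward implication.

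For $(2)\Rightarrow(1)$, fix $x\in R$ and take $n$ and a projection $e$ with $r(x^n)=r(e)$; I would try to show that $e$ itself serves as $GRP(x)$ at a suitable power. The annihilator half of the $GRP$ definition, $x^my=0\Rightarrow ey=0$, is inherited from $r(x^n)=r(e)$. For the equality half, set $c=x^n-x^ne$ and observe that $ce=0$ and that $cy=0$ for every $y\in r(e)=r(x^n)$; since $r(c)$ is a right ideal containing both $e$ and $r(e)$, and since $R=eR+r(e)$ (write $y=ey+(y-ey)$), it follows that $r(c)=R$, i.e. $cR=0$. Taking $y=c^*$ gives $cc^*=0$, so by weak properness $c$ is nilpotent (indeed $c^2\in cR=0$). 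I would then promote this to an actual equality $x^me=x^m$ at some power $m$, which, via the identity $x^{n+k}-x^{n+k}e=x^kc$, amounts to showing the defect $x^kc$ vanishes for some $k$. The finite nilpotent examples ($\mathbb{Z}_4$ and $\mathbb{C}[x,y]/(x,y)^n$) indicate that $k>0$ is genuinely required and that $e$ becomes a $GRP$ only after passing to a higher power.

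The hard part is exactly this last step: with a merely weakly proper (not proper) involution, $cc^*=0$ forces only nilpotence of $c$, not $c=0$, so one cannot conclude $x^ne=x^n$ at the first available power. This is precisely where the generalized theory departs from the classical weakly Rickart $*$-ring characterization, in which proper involution forces $xe=x$ outright. I expect the main work to lie in controlling the chain $\{x^kc\}$ — each member of which again lies in the two-sided ideal $\{z: zR=0\}$ and is square-zero — and in checking that the annihilator relation $r(x^m)=r(e)$ survives the passage to the power $m$ at which the defect disappears, so that $e=GRP(x)$ and hence $R$ is generalized weakly Rickart.
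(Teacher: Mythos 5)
Your forward implication is correct and coincides with the paper's: the inclusion $r(x^n)\subseteq r(e)$ is the annihilator clause of $GRP(x)=e$, the reverse inclusion follows from $x^ny=(x^ne)y=x^n(ey)=0$, and weak properness is Proposition \ref{s2pr10}(2). The converse, however, is not a proof: you explicitly leave open the decisive step, namely showing that the defect $c=x^n-x^ne$ (which you correctly observe satisfies $cR=0$ and hence $c^2=0$, so that weak properness adds nothing) dies at some higher power. For comparison, the paper never meets this difficulty because its argument tacitly uses a unity element: from $e(1-e)=0$ it concludes $1-e\in r(e)=r(x^n)$, hence $x^n(1-e)=0$, i.e.\ $x^ne=x^n$ outright, and weak properness is not used at all in that direction. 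If you grant $1\in R$, your $c$ is zero from the start and your whole construction collapses to that one line.

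In the genuinely non-unital setting --- which the statement ought to cover, since generalized weakly Rickart $*$-rings are explicitly allowed to lack a unity --- the difficulty you flag is not merely hard but insurmountable: condition (2) does not imply condition (1). Take $R=K[X]^3$ for a field $K$ with the identity involution, with product $(p,q,r)(p',q',r')=(pp',pq',rp')$ and involution $(p,q,r)^*=(p,r,q)$. This is an associative non-unital $*$-ring whose only projections are $0$ and the elements $(1,q,q)$; the involution is weakly proper, since $zz^*=0$ forces $z=(0,q,r)$ and then $z^2=0$; and every $y$ satisfies (2), because either $y^2=0$ (so $r(y^2)=r(0)=R$) or $r(y)=0\oplus 0\oplus K[X]=r\bigl((1,0,0)\bigr)$. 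Yet for $x=(X,1,0)$ one computes $x^m=(X^m,X^{m-1},0)$ and $x^m(1,s,s)=(X^m,X^m s,0)\neq x^m$ for every $m$ and every $s$, so no projection can serve as $GRP(x)$ and $R$ is not generalized weakly Rickart. So the chain $\{x^kc\}$ you propose to control genuinely need not terminate; the implication $(2)\Rightarrow(1)$ requires a unity (or some substitute hypothesis), which is exactly what the paper's proof is silently assuming.
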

\begin{proof} Suppose $R$ is a generalized weakly Rickart $ *$-ring.
	By Proposition \ref{s2pr10}, involution on $R$ is weakly proper.
	Let $ x\in R $ and  $GRP(x)=e$.	Let $y\in r(x^n)$.
	Therefore $x^ny=0$ implies $ey=0$, and hence $y\in r(e)$. Thus $r(x^n)\subseteq r(e)$.
	Now let $ y\in r(e)$. Therefore $ey=0$, which implies $x^ny=x^ney=x^n0=0$.
	Hence $y\in r(x^n)$. Therefore $r(e) \subseteq r(x^n)$.	Thus $r(x^n)=r(e)$.
    Conversely, suppose involution $*$ is weakly proper and for every $ x\in R$ there exist
    $n\in\mathbb N$ and a projection $e$ in $R$ such that $r(x^n)=r(e)$.
	Let $x\in R$. Since $r(x^n)=r(e)$, we have $e(1-e)=0$ implies $1-e\in r(x^n)$. 
	Therefore  $x^n(1-e)=0$ implies $x^n=x^ne$.
	If $x^ny=0$ then $y \in r(x^n)$. This gives $y\in r(e)$ and hence $ey=0$.
	Therefore $e=GRP(x)$ and $R$ is a generalized weakly Rickart $*$-ring.
\end{proof}

  In the following result, we prove that the corner of a generalized weakly Rickart $*$-ring is itself a generalized weakly Rickart $*$-ring.
  
\begin{proposition} \label{s2pr12} Let $R$ be a $*$-ring  and $e$ be a projection in $R$. If $R$ is a generalized weakly Rickart $*$-ring then so is $eRe$.
\end{proposition}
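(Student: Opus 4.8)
The plan is to show that for every $a \in eRe$ the generalized right projection of $a$ computed in the ambient ring $R$ already lies in the corner $eRe$, and then to check that it serves as a generalized right projection of $a$ inside $eRe$. First I would record the standard facts about the corner: $eRe$ is a $*$-ring under the restriction of $*$, with unity $e$, and for $a \in eRe$ one has $ea = ae = a$, so that $a^k e = a^k = e a^k$ for every $k \ge 1$ and the powers of $a$ taken in $eRe$ coincide with its powers in $R$.

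Next, since $R$ is a generalized weakly Rickart $*$-ring, $GRP(a) = g$ exists in $R$: there is $n \in \mathbb N$ with $a^n g = a^n$ and, for all $y \in R$, $a^n y = 0$ implies $gy = 0$. The key step is to prove $g \le e$, which is what places $g$ in the corner. For this I would use that $e$ itself is a projection satisfying $a^n e = a^n$ (from the first paragraph). Then $a^n(e - g) = a^n - a^n = 0$, and applying the defining implication of $GRP(a)$ to $y = e - g$ yields $g(e - g) = 0$, that is $ge = g$; taking adjoints gives $eg = g$ as well. Hence $g = ege \in eRe$, and $g$ is a genuine projection of the ring $eRe$. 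This is exactly the minimality of the generalized right projection recorded in Proposition \ref{s2pr3}(2), whose proof uses only the $GRP$ axioms and not the presence of a unity, so it remains available in the weakly Rickart setting.

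Finally I would verify that $g = GRP(a)$ in $eRe$. The relation $a^n g = a^n$ already holds in $R$ and hence in $eRe$, since the products there agree with those in $R$. For the annihilation property, I take $y \in eRe$ with $a^n y = 0$; reading this as an equation in $R$ and invoking the $R$-property of $g$ gives $gy = 0$. Thus $g$ is the generalized right projection of $a$ inside $eRe$, so every element of $eRe$ possesses a generalized right projection and $eRe$ is a generalized weakly Rickart $*$-ring.

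I expect the only genuine obstacle to be the containment $g \in eRe$, equivalently the inequality $g \le e$; once the minimality of $GRP$ is invoked this is immediate, and the remaining verifications amount to routine transfers of identities between $R$ and its corner.
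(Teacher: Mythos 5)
Your proposal is correct and follows essentially the same route as the paper: both obtain $GRP(a)=g$ in $R$, use $a^ne=a^n$ to get $a^n(e-g)=0$ and hence $ge=g$ (so $g=ege\in eRe$ after taking adjoints), and then check the two defining properties of $GRP$ inside the corner. Your additional remark that the minimality argument of Proposition \ref{s2pr3}(2) needs no unity is a fair observation, but the paper's proof carries out the same computation directly without citing it.
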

\begin{proof} Let $x\in eRe$. Then $x \in R $. Since $R$ is a generalized weakly Rickart $*$-ring,
	$GRP(x)=f$ exists in $R$.
	Therefore there exist $n \in \mathbb N$ such that $x^nf=x^n$; and for $y\in R$, $x^ny=0$ implies $fy=0$.
	Since $x\in eRe$, we have $x=exe$. So $x^ne=(exe)^ne=(exe)^n=x^n$.Therefore $x^n(e-f) =x^ne-x^nf=x^ne-x^n$. 
	 Hence $x^n(e-f) =0$, which implies $f(e-f)=0$ thus $fe=f$.
	 We prove that $GRP(x)=f$ in $eRe$.
    As above $x^nf=x^n$. 	
	 Suppose $x^n(eze)=0$, then $feze=0$. Note that $f=ef=efe\in eRe$.
     Therefore  $GRP(x)=f$ in $eRe$.
	Thus $eRe$ is a generalized weakly Rickart $*$-ring.
\end{proof}

\begin{proposition} \label{s2pr13} Let R be a generalized weakly Rickart $*$-ring and $S$ be a self-adjoint subset of $R$ and $x\in S'$. If $GRP(x)=e$ then $ se=ese=es $ for all $ s\in S $.
\end{proposition}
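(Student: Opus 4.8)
The plan is to mimic the argument used in Proposition \ref{s2pr5}(1) and Proposition \ref{s2pr9}, exploiting the defining annihilator property of the generalized right projection together with the self-adjointness of $S$. First I would unpack the hypothesis $GRP(x)=e$: there is an $n\in\mathbb N$ with $x^ne=x^n$, and for every $y\in R$, $x^ny=0$ forces $ey=0$. Since $x\in S'$, each $s\in S$ commutes with $x$, hence with $x^n$, so $x^ns=sx^n$.

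The core step is to establish $es=ese$ for each $s\in S$. Using $x^ne=x^n$ and $x^ns=sx^n$, I would compute
$$x^n(s-se)=x^ns-x^nse=sx^n-sx^ne=sx^n-sx^n=0.$$
Applying the annihilator half of the $GRP$ property to $y=s-se$ then yields $e(s-se)=0$, that is, $es=ese$. (Note that one must form the difference $s-se$ rather than $s-es$: the latter would only reproduce the trivial identity $e^2s=es$ and give no information.)

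To obtain the remaining equality $se=ese$, I would invoke that $S$ is self-adjoint: since $s\in S$ implies $s^*\in S$ and $x$ also commutes with $s^*$, the identical computation with $s^*$ in place of $s$ gives $es^*=es^*e$. Taking adjoints of this identity and using $e=e^*$ and $s^{**}=s$ produces $se=ese$. Combining $es=ese$ with $se=ese$ gives $se=ese=es$ for all $s\in S$, as required.

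I do not anticipate a genuine obstacle here; the only point needing care is that the $GRP$ annihilator condition is intrinsically one-sided, controlling $es$ but not $se$, so the self-adjointness of $S$ is essential for recovering the symmetric conclusion through the involution. One should also confirm that $x$ commutes with $s^*$, which holds precisely because $x\in S'$ and $s^*\in S$ by the self-adjointness assumption.
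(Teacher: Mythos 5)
Your proposal is correct and follows essentially the same route as the paper's proof: both apply the annihilator property of $GRP(x)$ to an element built from $s$ and $e$ (you use $s-se$, the paper uses $se-es$) to obtain $es=ese$, and then substitute $s^*$ and take adjoints to get $se=ese$. The only difference is the cosmetic choice of which difference to annihilate; the logic and the use of self-adjointness of $S$ are identical.
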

\begin{proof}  Since $x\in S'$, we have $xs=sx$ for all $s\in S$.
	As $GRP(x)=e$. Then there exists  $ n\in\mathbb N$ such that  $x^ne=x^n$; and for $y \in R$, $x^ny=0$ implies $ey=0$. We have $x^ns=sx^n$. 
	Now $x^n(se-es)=x^nse-x^nes=sx^ne-x^ns=sx^n-x^ns=0$. Therefore $e(se-es)=0$ implies $ese=es$.
	Replacing $s$ by $s^*$ we get $es^*e=es^*$.
	Therefore  $ese=se$.
    Thus  $se=ese=es$.
\end{proof}

\begin{lemma} If $R$ is a generalized weakly Rickart $ *$-ring and $S$ is self adjoint subset of $R$, then $ S' $ is a weakly generalized Rickart $ *$-ring.
\end{lemma}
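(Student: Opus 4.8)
The plan is to show that every $x\in S'$ has a generalized right projection lying in $S'$; since $S'$ plainly contains the unity $1$ of $R$ (as $1$ commutes with everything), this alone makes $S'$ a generalized weakly Rickart $*$-ring, no unity hypothesis being needed in the definition.

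First I would confirm that $S'$ is a genuine $*$-subring of $R$, so the statement makes sense. Closure under the ring operations is routine, and $1\in S'$. The one place the hypothesis on $S$ is used is closure under the involution: if $x\in S'$ then $xs=sx$ for all $s\in S$, and applying $*$ gives $s^*x^*=x^*s^*$; since $S$ is self-adjoint, $\{s^*:s\in S\}=S$, so $x^*$ commutes with every element of $S$, that is $x^*\in S'$.

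Next, for a fixed $x\in S'\subseteq R$, I would invoke that $R$ is a generalized weakly Rickart $*$-ring to obtain $e=GRP(x)$ in $R$: there is $n\in\mathbb{N}$ with $x^ne=x^n$, and $x^ny=0$ implies $ey=0$ for all $y\in R$. The decisive step is to place $e$ inside $S'$, and this is precisely Proposition \ref{s2pr13}: because $x\in S'$ and $S$ is self-adjoint, that proposition yields $se=ese=es$ for all $s\in S$, so $e$ commutes with every element of $S$ and hence $e\in S'$.

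Finally I would check that $e$ functions as $GRP(x)$ within $S'$ itself. Since $x,e\in S'$, the identity $x^ne=x^n$ holds in $S'$, and $e$ is a projection of $S'$ (a self-adjoint idempotent of $R$ lying in $S'$). For the annihilator condition, the implication $x^ny=0\Rightarrow ey=0$ already holds for every $y\in R$, hence a fortiori for every $y\in S'$. Thus $GRP(x)=e$ exists in $S'$ for each $x\in S'$, and $S'$ is a generalized weakly Rickart $*$-ring. I expect no real obstacle beyond applying Proposition \ref{s2pr13}; the only subtlety is that the weaker condition ``for all $y\in S'$'' is automatic from the stronger ``for all $y\in R$,'' so nothing is lost in descending to the subring.
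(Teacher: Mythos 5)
Your proposal is correct and follows the same route as the paper: take $e=GRP(x)$ in $R$, use Proposition \ref{s2pr13} to conclude $e\in S'$, and observe that the defining properties of $GRP(x)$ restrict to $S'$. The paper's proof is just a terser version of yours, omitting the routine verification that $S'$ is a $*$-subring and that the annihilator condition descends.
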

\begin{proof} Let $ x\in S' $ and $GRP(x)=e $ in $R$.
	By Proposition \ref{s2pr13}, $se=es$ for all $ s\in S$.
	Hence $e\in S'$.
	Therefore  $GRP(x)=e $ in $S'$. Thus $S'$ is a generalized weakly Rickart $*$-ring.
\end{proof}

 \section{Unitification of generalized weakly Rickart $*$-ring}

 Recall the definition of unitification of a $*$-ring given by Berberian \cite{Ber}.
 Let $R$ be a $*$-ring. We say that $R_1$ is a unitification of $R$, if there exists a  ring $K$, such that,\\
 1) $K$ is an integral domain with involution (necessarily
 proper), that is, $K$ is a commutative $*$-ring with unity
 and without divisors of zero (the identity involution is
 permitted),\\
 2) $R$ is a $*$-algebra over $K$ (i.e., $R$ is a left $K$-module such that,
 identically
 $1a=a,~\lambda (ab)=(\lambda a)b=a (\lambda b),~ and~(\lambda a)^*=\lambda ^*
 a^*$, for $\lambda \in K$ and $a,b \in R$).\\
 3) $R$ is torsion free $K$-module (that is $\lambda a=0$ implies $\lambda =0 $ or $a=0$).\\
 Define $R_1=R \oplus K$ (the additive group direct sum), thus
 $(a, \lambda)=(b, \mu)$ means, by the definition that $a=b$ and $\lambda
 =\mu$, and addition in $R_1$, is defined by the formula $(a, \lambda)+(b, \mu)=(a+b, \lambda +
 \mu)$. Define  $(a, \lambda)(b, \mu)=(ab+ \mu a+ \lambda b, \lambda
 \mu)$, $\mu (a, \lambda)=(\mu a, \mu \lambda)$, $(a, \lambda)^*=(a^*, \lambda
 ^*)$. Evidently, $R_1$ is also a $*$-algebra over $K$, has unity
 element $(0, 1)$ and $R$ is a
 $*$-ideal in $R_1$.
  
  Berberian has given a partial solution to Problem 1 as follows.
\begin{theorem}[{\cite[Theorem 1, page 31]{Ber}}] \label{Th1}
	Let $R$ be a weakly Rickart $*$-ring. If there exists an involutory integral domain $K$ such that $R$ is a $*$-algebra over $K$  and it is a torsion-free $K$-module, then $R$ can be embedded in a Rickart $*$-ring with preservation of RP's. 
\end{theorem}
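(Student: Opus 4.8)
The plan is to take for $R_1$ the unitification $R \oplus K$ described just above, together with the map $\iota\colon R \to R_1$, $\iota(a) = (a,0)$. One checks at once that $\iota$ is a $*$-monomorphism onto the $*$-ideal $R \times \{0\}$, that $(0,1)$ is the identity of $R_1$, and that the identity of $K$ is self-adjoint (an idempotent of a domain fixed by the involution). So it is enough to prove that $R_1$ is a Rickart $*$-ring and that $RP_{R_1}(\iota(a)) = \iota(RP_R(a))$ for all $a \in R$. The first move is to note that the involution of $R_1$ is proper: if $(z,\nu)^*(z,\nu) = 0$, the $K$-component gives $\nu^*\nu = 0$, so $\nu = 0$ because $K$ is a domain, and then $z^*z = 0$ gives $z = 0$ since the involution of the weakly Rickart $*$-ring $R$ is proper. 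Hence $r(t) = r(t^*t)$ for every $t \in R_1$, and it suffices to compute right annihilators of self-adjoint elements $s = (c,\kappa)$, where $c = c^*$ and $\kappa = \kappa^*$; for $t = (a,\lambda)$ we have $s = t^*t$ with $\kappa = \lambda^*\lambda$, so $\kappa = 0$ precisely when $\lambda = 0$.

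Fix such an $s = (c,\kappa)$ and let $p = RP_R(c)$. Because $c$ is self-adjoint, $p$ is also its left projection, so $cp = pc = c$. When $\kappa = 0$ a short computation gives $(c,0)(b,\mu) = (c(b + \mu p),0)$, and the defining property $cy = 0 \Leftrightarrow py = 0$ then shows $r(s) = ((0,1)-(p,0))R_1 = (-p,1)R_1$. Applying this to $t = \iota(a)$, for which $s = t^*t = \iota(a^*a)$ and $p = RP_R(a^*a) = RP_R(a)$ by properness of the involution of $R$, we obtain $RP_{R_1}(\iota(a)) = (0,1)-(-p,1) = (p,0) = \iota(RP_R(a))$. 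This already yields the embedding with preservation of $RP$'s, once the remaining case confirms that $R_1$ is Rickart.

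The heart of the argument, and the step I expect to be the main obstacle, is $\kappa \neq 0$. Here $(c,\kappa)(b,\mu) = 0$ forces $\kappa\mu = 0$, hence $\mu = 0$ as $K$ is a domain, so $r(s) = \{(b,0) : cb = -\kappa b\}$; the generator we seek therefore must have zero $K$-component, and the task is to show this eigenspace-like set is a principal right ideal of $R$. I would first multiply $cb = -\kappa b$ on the left by $p$ and use $pc = c$ to get $\kappa(pb - b) = 0$; torsion-freeness together with $\kappa \neq 0$ then forces $pb = b$. Next, put $c_0 = c + \kappa p$, a self-adjoint element of $R$ with $pc_0 = c_0$, and verify that for $b$ with $pb = b$ the relation $cb = -\kappa b$ is equivalent to $c_0 b = 0$. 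Writing $q = RP_R(c_0)$ gives $r(s) = \{(b,0) : pb = b,\ qb = 0\}$. Since $pc_0 = c_0$ exhibits $p$ as a left identity for $c_0$ and the left projection is the smallest such, we get $q \le p$; consequently $\{b : pb = b,\ qb = 0\} = \{b : (p-q)b = b\}$, so with the projection $h = p - q \in R$ we conclude $r(s) = (h,0)R_1$. Thus every right annihilator in $R_1$ is generated by a projection, $R_1$ is a Rickart $*$-ring, and the embedding preserves $RP$'s. The two places where the hypotheses are indispensable, and where a naive attempt would fail, are the cancellation of the nonzero scalar $\kappa$ via torsion-freeness and the use of the minimality of the left projection to secure $q \le p$; dropping either leaves $\{b : cb = -\kappa b\}$ with no reason to be principal.
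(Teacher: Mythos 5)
The paper does not actually prove this statement---it is quoted verbatim from Berberian as Theorem~1, page~31 of \cite{Ber}---so there is no in-paper argument to measure yours against; the nearest relative is the proof of the unitification theorem for generalized weakly Rickart $*$-rings in Section~3. Your argument is correct and is, in essence, Berberian's original one: form $R_1=R\oplus K$, use properness of the involution of $R_1$ (inherited from properness in a weakly Rickart $*$-ring plus $K$ being a domain) to replace $r(t)$ by $r(t^*t)$ and thereby reduce to self-adjoint $s=(c,\kappa)$ with $\kappa=\lambda^*\lambda$, and then split on $\kappa=0$ versus $\kappa\neq 0$. All the delicate steps check out: in the $\kappa\neq0$ case the annihilator is $\{(b,0): cb=-\kappa b\}$, torsion-freeness cancels $\kappa$ to force $pb=b$ with $p=RP_R(c)$, and with $c_0=c+\kappa p$ (self-adjoint, so $RP(c_0)=LP(c_0)=q\leq p$ by minimality of the left projection) the annihilator becomes $(p-q,0)R_1$; the $\kappa=0$ case and the computation $RP_{R_1}(\iota(a))=(RP_R(a),0)$ are likewise correct. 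The comparison worth recording is with Section~3 of the paper: the generalized version there keeps the same skeleton (unitification, properness, separate treatment of $\lambda=0$ and $\lambda\neq0$) but cannot cancel the scalar by torsion-freeness, since that hypothesis is replaced by the existence of the bounding projections $e_\lambda$; instead it extracts a relation of the form $\lambda(ke-k)=0$ by comparing coefficients and then uses $e_\lambda$ to kill the resulting projection, and it must additionally carry powers $x^n$ throughout. Your proof is the cleaner special case that those complications generalize.
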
 
After 1972, there was little progress made toward the solution of Problem 1. In [14], Thakare and Waphare provided partial solutions, where the condition on the underlying weakly Rickart $*$-rings was relaxed in two distinct ways. For the solution of this open problem, Berberian used the condition that $R$ is a torsion-free left K-module, where K is an integral domain. Thakare and Waphare offered another solution in which the torsion-free condition was replaced with a different condition. They established the following.

\begin{theorem}[{\cite[Theorem 2]{Tha3}}] \label{Th2}
	A weakly Rickart $*$-ring $R$ can be embedded into a Rickart $*$-ring, provided there exists a ring $K$ such that
	\begin{enumerate}
\item $K$ is an integral domain with involution,
	\item  $R$ is $*$-algebra over $K$, and
	\item  For any $\lambda \in K-\{0\}$, there exist a projection $e_\lambda$ that is an upper bound for the set of left projections of the right annihilators of $\lambda$, that is if $x\in R$ and $\lambda x=0$ then $LP(x)\leq e_\lambda$.
		\end{enumerate}
\end{theorem}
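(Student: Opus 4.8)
The plan is to embed $R$ into the unitification $R_1=R\oplus K$ and to prove that $R_1$ is a Rickart $*$-ring in which the $RP$'s of elements of $R$ are preserved. Although $R$ is no longer assumed torsion-free, the formulas $(a,\lambda)(b,\mu)=(ab+\mu a+\lambda b,\lambda\mu)$ and $(a,\lambda)^*=(a^*,\lambda^*)$ still make $R_1$ a $*$-algebra over $K$ with unity $(0,1)$ in which $a\mapsto(a,0)$ is a $*$-monomorphism onto the $*$-ideal $R\times\{0\}$, so the construction itself needs no torsion-freeness. I would first record the elementary fact that a weakly Rickart $*$-ring $S$ with unity is automatically Rickart: if $e=RP(x)$ then $xy=0\Leftrightarrow ey=0\Leftrightarrow y=(1-e)y$, whence $r(x)=(1-e)S$. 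I would also note that the involution of any weakly Rickart $*$-ring is proper, since $xx^*=0$ with $e=RP(x)$ forces $ex^*=0$, while $xe=x$ gives $ex^*=x^*$, so $x=0$; the same then holds in $R_1$. Thus it suffices to produce a right projection for every $(a,\lambda)\in R_1$.

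Next I would classify the projections of $R_1$. Since $K$ is a domain, a projection $(e,\epsilon)$ satisfies $\epsilon^2=\epsilon$, so $\epsilon\in\{0,1\}$, and the projections are exactly $(e,0)$ with $e$ a projection of $R$ and $(-g,1)$ with $g$ a projection of $R$. The case $\lambda=0$ is then immediate: I claim $RP(a,0)=(RP_R(a),0)$. Writing $e=RP_R(a)$ we have $ae=a$, so $(a,0)(e,0)=(a,0)$; and if $(a,0)(y,\mu)=(ay+\mu a,0)=0$, then $a(y+\mu e)=ay+\mu ae=ay+\mu a=0$, so the weakly Rickart property of $R$ gives $e(y+\mu e)=ey+\mu e=0$, i.e.\ $(e,0)(y,\mu)=0$. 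This already shows that the embedding preserves $RP$'s.

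The heart of the argument is the case $\lambda\neq 0$. Since $K$ is a domain, the scalar component of any element annihilating $(a,\lambda)$ on the right must vanish, so $r(a,\lambda)=N\times\{0\}$, where $N=\{y\in R:ay+\lambda y=0\}$ is a right ideal and $K$-submodule of $R$. By the classification, $(a,\lambda)$ has a right projection exactly when there is a projection $g$ of $R$ with $g\in N$ and $N\subseteq gR$; in that case $RP(a,\lambda)=(-g,1)$ and $r(a,\lambda)=(g,0)R_1$. Using properness and $r(u)=r(u^*u)$ I would first replace $(a,\lambda)$ by the self-adjoint $s=(a,\lambda)^*(a,\lambda)=(h,\delta)$, where $h=h^*\in R$ and $\delta=\lambda^*\lambda\neq0$; this rewrites $N$ as $\{y\in R:hy+\delta y=0\}$. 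Setting $e=RP_R(h)$, so that $he=h$ and, by self-adjointness, $eh=h$, any $y\in N$ satisfies $-\delta ey=e(hy)=hy=-\delta y$, whence $\delta(1-e)y=0$; that is, $(1-e)y$ lies in the $\delta$-torsion of $R$. Hence, up to torsion, $N\subseteq eR$, and the problem localizes to the corner $eRe$ — itself a weakly Rickart $*$-ring — in which $h$ is a non-left-zero-divisor and one must construct the eigenprojection of $h$ at $-\delta$.

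The main obstacle is exactly this construction of $g$. The map $y\mapsto ay+\lambda y$ is a scalar shift of left multiplication by $a$, and $\lambda$ (equivalently $\delta$) has no counterpart inside $R$, so $N$ is not the right annihilator of any single element of $R$ and the weakly Rickart structure of $R$ cannot be applied to it directly. This is precisely where Hypothesis (3) enters: applied to the nonzero scalar $\delta$, it furnishes a projection that bounds the left projections of all $\delta$-torsion elements, and these torsion terms $(1-e)y$ are exactly what must be absorbed in order to patch the eigenprojection built inside the corner into a global projection $g$ with $g\in N$ and $N\subseteq gR$. The remaining work is to verify that the pieces fit together so that $(-g,1)$ is genuinely self-adjoint and idempotent. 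Once $g$ is produced, $R_1$ is a weakly Rickart $*$-ring with unity, hence a Rickart $*$-ring, and $a\mapsto(a,0)$ is the desired $RP$-preserving embedding; the case $\lambda=0$ and the implication ``weakly Rickart with unity $\Rightarrow$ Rickart'' are routine, and the torsion-free hypothesis of Berberian's theorem is used only through this torsion analysis, for which (3) substitutes.
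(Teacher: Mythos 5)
The paper itself states this theorem as a quoted result from Thakare--Waphare and does not reprove it; the closest thing to ``the paper's own proof'' is the argument for the generalized analogue in Section 3, and measured against that (or against the classical argument it generalizes) your proposal has a genuine gap. Everything up to the case $\lambda\neq 0$ is fine: the unitification, the classification of projections of $R_1$ as $(e,0)$ and $(-g,1)$, the case $\lambda=0$, the reduction ``weakly Rickart with unity $\Rightarrow$ Rickart,'' and the observation that $RP(a,\lambda)=(-g,1)$ exists precisely when there is a projection $g$ with $ag+\lambda g=0$ and $N=\{y: ay+\lambda y=0\}\subseteq gR$. But the construction of that $g$ --- which is the entire content of the theorem --- is never carried out. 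You reduce to a self-adjoint $a$, set $e=RP(a)$, observe that $\lambda(y-ey)=0$ for $y\in N$, and then say the problem ``localizes to the corner $eRe$'' where one ``must construct the eigenprojection of $a$ at $-\lambda$,'' deferring with ``once $g$ is produced\dots''. Producing that eigenprojection is exactly the hard step, and passing to the corner does not help: $-\lambda$ is a scalar with no avatar in $R$, so inside $eRe$ you face the same problem you started with.

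The missing idea is to build $g$ from an honest element of $R$. Choose a projection $e$ dominating both $RP(a)$ and the projection $e_\lambda$ from hypothesis (3), form $b=ae+\lambda e\in R$, set $h=RP(b)$ (so $h\leq e$), and put $g=e-h$. Then $bg=0$ gives $ag+\lambda g=0$, so $g\in N$; and for any $y\in N$ one gets $\lambda(y-ey)=0$, whence by (3) $LP(y-ey)\leq e_\lambda\leq e$, which combined with $e(y-ey)=0$ forces $y=ey$, hence $by=ay+\lambda y=0$, hence $hy=0$ and $y=gy\in gR$. This is the precise point where hypothesis (3) substitutes for Berberian's torsion-freeness (which would give $y=ey$ directly from $\lambda(y-ey)=0$), and it is used a second time in the same way to show $g$ is the largest projection annihilated by $a+\lambda$. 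Your writeup correctly locates \emph{where} (3) must enter but never executes this argument, so as it stands the proof is an outline rather than a proof.
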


  Based on the theory developed in Section 2, we pose the following problem for generalized Rickart $*$-rings, similar to Problem 1.
   
{\bf Problem 2}: Can every generalized weakly Rickart $*$-ring be embedded in a generalized Rickart $*$-ring? with preservation of $GRP$.

For a partial solution to Problem 2, the following results are required.

\begin{proposition} If $(a,0)\in  R_1 = R\oplus K$ then $(a,0)^n=(a^n ,0)$ for all  $n\in\mathbb N$.
\end{proposition}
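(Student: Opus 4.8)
The plan is to prove this by induction on $n$, with the entire argument resting on a single observation about the multiplication rule in the unitification $R_1$. Recall that multiplication in $R_1$ is defined by $(a,\lambda)(b,\mu)=(ab+\mu a+\lambda b,\lambda\mu)$. The crucial point is that when both scalar components vanish, the two cross-terms $\mu a$ and $\lambda b$ disappear and the scalar product $\lambda\mu$ is $0$, so that $(a,0)(b,0)=(ab,0)$. In other words, the subset $R\times\{0\}$ is closed under multiplication and the multiplication there agrees with that of $R$ in the first coordinate. This reduces the claim to an elementary induction.

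First I would dispose of the base case $n=1$, where $(a,0)^1=(a,0)=(a^1,0)$ holds by definition. For the inductive step, I would assume the statement for $n$, namely $(a,0)^n=(a^n,0)$, and then compute
\[
(a,0)^{n+1}=(a,0)^n(a,0)=(a^n,0)(a,0).
\]
Applying the multiplication formula with $\lambda=\mu=0$ gives $(a^n,0)(a,0)=(a^n a,0)=(a^{n+1},0)$, which completes the induction.

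There is no genuine obstacle here; the result is a direct consequence of the definitions, and the only point requiring any care is the correct bookkeeping of the cross-terms in the product $(a,\lambda)(b,\mu)$ to confirm that they indeed vanish when $\lambda=\mu=0$. This lemma is the expected first step toward the partial solution of Problem~2, since it guarantees that the natural copy $R\times\{0\}$ of $R$ inside $R_1$ respects powers, so that the generalized right projection condition, which is phrased in terms of powers $x^n$, transfers faithfully between $R$ and its embedding in $R_1$.
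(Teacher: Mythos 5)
Your proof is correct and follows essentially the same route as the paper's: induction on $n$, with the inductive step carried out by applying the multiplication formula $(a,\lambda)(b,\mu)=(ab+\mu a+\lambda b,\lambda\mu)$ in the case $\lambda=\mu=0$. Your explicit remark that the cross-terms vanish is a helpful clarification of what the paper leaves implicit, but the argument is the same.
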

\begin{proof} We prove the result by using mathematical induction on $n$. 
	Clearly result holds for $n=1$. Suppose result is true for $n=k$.
	That is $(a,0)^k = (a^k,0) $.
	Consider $(a,0)^{k+1}=(a,0)(a,0)^k=(a,0)(a^k,0)=(aa^k,0)=(a^{k+1},0)$.
	Thus by method of induction $(a,0)^n=(a^n,0)$ for all $n\in\mathbb N$. 
\end{proof}

\begin{proposition} If $(a,\lambda)\in R_1$ and $n \in \mathbb N$ then
	$(a,\lambda)^n=(a^n+\binom{n}{1} a^{n-1}\lambda+\binom{n}{2} a^{n-2}\lambda^2+\cdots+\binom{n}{n-1}a\lambda^{n-1} , \lambda^n)$.	
\end{proposition}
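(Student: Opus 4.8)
The plan is to argue by induction on $n$, using the explicit multiplication rule $(a,\lambda)(b,\mu)=(ab+\mu a+\lambda b,\lambda\mu)$ of $R_1$ together with the $*$-algebra axioms, which guarantee that scalars from $K$ pull through products so that $\lambda(a^{j}\lambda^{k})=a^{j}\lambda^{k+1}$ and $(a^{j}\lambda^{k})a=a^{j+1}\lambda^{k}$. The base case $n=1$ is immediate, since the claimed right-hand side reduces to $(a,\lambda)$ (the first-coordinate sum has only the $k=0$ term $a$, and the second coordinate is $\lambda$).

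For the inductive step, assume the formula for $n$ and write $(a,\lambda)^{n+1}=(a,\lambda)^{n}(a,\lambda)$. Abbreviate the inductive expression as $(b,\lambda^{n})$ with $b=\sum_{k=0}^{n-1}\binom{n}{k}a^{n-k}\lambda^{k}$. Applying the multiplication rule, the second coordinate becomes $\lambda^{n}\lambda=\lambda^{n+1}$, as required, and the first coordinate becomes $ba+\lambda b+\lambda^{n}a$. These three summands are $\sum_{k=0}^{n-1}\binom{n}{k}a^{n+1-k}\lambda^{k}$, then $\sum_{k=0}^{n-1}\binom{n}{k}a^{n-k}\lambda^{k+1}$, and finally $a\lambda^{n}$. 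Reindexing the middle sum by $j=k+1$ and collecting the coefficient of $a^{n+1-k}\lambda^{k}$ for each $k$ produces $\binom{n}{k}+\binom{n}{k-1}=\binom{n+1}{k}$ in the interior range $1\le k\le n-1$, with the endpoints handled separately: the $k=0$ coefficient is $\binom{n}{0}=\binom{n+1}{0}$ coming only from $ba$, and the $k=n$ coefficient is $\binom{n}{n-1}+1=n+1=\binom{n+1}{n}$, where the extra $1$ is exactly the contribution of the stray term $\lambda^{n}a$. This is the asserted formula with $n$ replaced by $n+1$.

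The only genuine obstacle is the index bookkeeping in this step: one must track the shift produced by left multiplication by $\lambda$, confirm that $\lambda^{n}a$ supplies precisely the coefficient needed to upgrade $\binom{n}{n-1}$ to $\binom{n+1}{n}$ at the top of the range, and observe that the first-coordinate sum correctly stops at $k=n$ because the would-be $k=n+1$ term is a pure scalar $\lambda^{n+1}$ that resides in the second coordinate rather than in $R$. Once these three points are checked, everything collapses to Pascal's identity.

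Alternatively, and more conceptually, one may write $1_{R_{1}}=(0,1)$ and note $(a,\lambda)=(a,0)+\lambda 1_{R_{1}}$, where $\lambda 1_{R_{1}}=(0,\lambda)$ is central in $R_1$ (a direct check gives $(0,\lambda)(c,\mu)=(\lambda c,\lambda\mu)=(c,\mu)(0,\lambda)$ since $K$ is commutative). The ordinary binomial theorem then applies to the commuting pair $(a,0)$ and $\lambda 1_{R_{1}}$, and invoking the preceding proposition $(a,0)^{m}=(a^{m},0)$ together with $(0,\lambda)^{k}=(0,\lambda^{k})$ shows that the terms with $k<n$ land in the first coordinate as $\binom{n}{k}a^{n-k}\lambda^{k}$ while the single $k=n$ term contributes $(0,\lambda^{n})$. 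This reproduces the formula at once and sidesteps the index juggling entirely, at the cost of invoking the binomial theorem for commuting elements.
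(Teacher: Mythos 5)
Your primary argument — induction on $n$, expanding $(a,\lambda)^{n}(a,\lambda)$ via the multiplication rule of $R_1$ and collecting coefficients with Pascal's identity $\binom{n}{k}+\binom{n}{k-1}=\binom{n+1}{k}$ — is exactly the paper's proof, and your index bookkeeping (including the role of the stray term $\lambda^{n}a$ at the top of the range) is correct. Your alternative via the binomial theorem applied to the commuting elements $(a,0)$ and the central $(0,\lambda)$ is also valid and arguably cleaner, but it goes beyond what the paper does.
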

\begin{proof} We prove the result by using mathematical induction on $n$.
Clearly result is true for $n=1$. Suppose the result is true for $n=k$.
That is $(a,\lambda)^k=(a^k+\binom{k}{1}a^{k-1}\lambda+\cdots+\binom{k}{k-1}a\lambda^{k-1},\lambda^k)$.
   	Consider $(a,\lambda)^{k+1}=(a,\lambda)^k(a,\lambda)
	=(a^k+\binom{k}{1}a^{k-1}\lambda+\cdots+
	\binom{k}{k-1}a\lambda^{k-1} ,\lambda^k) (a,\lambda) $
	$=(a^{k+1}+\binom{k}{1}a^k\lambda+\cdots+\binom{k}{k-1}a^2\lambda^{k-1}+a\lambda^k
	+a^k\lambda+\binom{k}{1}a^{k-1}\lambda^2+\cdots+\binom{k}{k-1}a\lambda^k,\lambda^{k+1})
	=(a^{k+1}+[\binom{k}{0}+\binom{k}{1}]a^k\lambda+\cdots+[\binom{k}{k-1}+\binom{k}{k}]a\lambda^k,\lambda^{k+1})
	=(a^{k+1}+\binom{k+1}{1}a^k\lambda+\binom{k+1}{2}a^{k-1}\lambda^2+\cdots+\binom{k+1}{k}a\lambda^k,\lambda^{k+1})$.	Hence by induction, $(a,\lambda)^n=(a^n+\binom{n}{1}a^{n-1}\lambda+\binom{n}{2}a^{n-2}\lambda^2+\cdots+\binom{n}{n-1}a\lambda^{n-1},\lambda^n)$ for all $n\in\mathbb N$.
\end{proof}

\begin{lemma} If a ring $R$ has weakly proper involution then the involution on $R_1$ is weakly proper.
\end{lemma}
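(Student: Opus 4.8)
The plan is to take an arbitrary $(a,\lambda)\in R_1=R\oplus K$ satisfying $(a,\lambda)(a,\lambda)^*=0$ and force $(a,\lambda)^n=0$ for some $n\in\mathbb N$. First I would expand the product $(a,\lambda)(a,\lambda)^*=(a,\lambda)(a^*,\lambda^*)$ using the multiplication rule $(a,\lambda)(b,\mu)=(ab+\mu a+\lambda b,\lambda\mu)$, which yields $(aa^*+\lambda^*a+\lambda a^*,\lambda\lambda^*)$. Setting this equal to $(0,0)$ splits the hypothesis into the two equations $\lambda\lambda^*=0$ in $K$ and $aa^*+\lambda^*a+\lambda a^*=0$ in $R$.

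The key reduction is to kill the $K$-component first. Since $K$ is an integral domain, $\lambda\lambda^*=0$ gives $\lambda=0$ or $\lambda^*=0$; and because $*$ is an anti-automorphism of period at most two, $\lambda^*=0$ already forces $\lambda=0$ upon applying $*$ once more. Hence $\lambda=0$, and consequently $\lambda^*=0$ as well.

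Substituting $\lambda=0$ into the $R$-component collapses it to $aa^*=0$. Now I would invoke the hypothesis that the involution of $R$ is weakly proper, which produces an $m\in\mathbb N$ with $a^m=0$. Finally, using the earlier proposition stating $(a,0)^n=(a^n,0)$ for all $n\in\mathbb N$, I conclude $(a,\lambda)^m=(a,0)^m=(a^m,0)=(0,0)$, so the involution on $R_1$ is weakly proper.

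I do not expect a genuine obstacle here; the argument is a direct unwinding of the definitions. The only point requiring a little care is the implication $\lambda\lambda^*=0\Rightarrow\lambda=0$, where both the absence of zero divisors in $K$ and the injectivity of the involution (no nonzero element of $K$ is sent to $0$) are used together. Everything else is a mechanical substitution followed by a single appeal to weak properness of $R$ and to the power formula for elements of the form $(a,0)$.
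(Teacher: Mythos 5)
Your proposal is correct and follows essentially the same route as the paper's proof: expand the product componentwise, use that $K$ is an integral domain to get $\lambda=0$, reduce to $aa^*=0$ in $R$, apply weak properness, and finish with the power formula $(a,0)^n=(a^n,0)$. Your extra remark that $\lambda^*=0$ forces $\lambda=0$ (since the involution is bijective) just makes explicit a step the paper leaves implicit.
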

\begin{proof} Since the involution in $R$ is weakly proper.
	Therefore for $x\in R$, $xx^*=0 $ implies $x^n=0$ for some $n\in\mathbb N$.
	Let $(a,\lambda) \in R_1$ and  $(a,\lambda)(a,\lambda)^*=0$.
	This gives  $(a,\lambda)(a^*,\lambda^*)=0$.
	Therefore $(aa^*+\lambda^* a+\lambda a^*,\lambda\lambda^*)=0$.
	Since $K$ is an integral domain,  $\lambda\lambda^*=0$ implies $\lambda=0$.
	Hence $(aa^*,0)=0$. Therefore $aa^*=0$. Thus $a^n=0$ for some $n\in\mathbb N$.
	Hence $(a ,\lambda)^n=(a,0)^n=(a^n,0)=(0,0)=0$.
	So $R_1$ has weakly proper involution.
\end{proof}

  In the following result, we provide a partial solution to Problem 2.

\begin{theorem}  A generalized weakly Rickart $*$-ring can be embedded in a generalized Rickart $*$-ring
provided there exists a ring $K$ such that
\begin{enumerate}
	\item  $K$ is an integral domain with involution.
	\item  $R$ is $*$-algebra over $K$. 
	\item  For any nonzero $\lambda\in K$, there exists a projection $e_\lambda$ such that $\lambda x=0$ implies   $GRP(x)\leq e_\lambda$.
\end{enumerate}
\end{theorem}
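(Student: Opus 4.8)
The plan is to prove the theorem by imitating the embedding of Theorem \ref{Th2}, replacing right and left projections by their generalized analogues and single elements by powers. Take $R_1=R\oplus K$ to be the unitification recalled above; the map $a\mapsto(a,0)$ is an injective $*$-homomorphism of $R$ onto the $*$-ideal $R\oplus 0$, and $R_1$ carries the unity $(0,1)$. By Proposition \ref{s2pr8}, since $R_1$ has unity it suffices to prove that $R_1$ is a generalized weakly Rickart $*$-ring, and for this I will use the characterization of Proposition \ref{s2pr11}. The involution of $R$ is weakly proper by Proposition \ref{s2pr10}, hence the involution of $R_1$ is weakly proper by the preceding lemma; so it remains only to produce, for each $X=(a,\lambda)\in R_1$, an integer $n$ and a projection $E\in R_1$ with $r(X^n)=r(E)$. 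Since $K$ is a domain, $\nu^2=\nu$ forces $\nu\in\{0,1\}$, so the projections of $R_1$ are exactly the elements $(g,0)$ and $(0,1)-(f,0)$ with $g,f$ projections of $R$; these are the candidates for $E$.

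First I would treat the case $\lambda=0$, where the binomial-power formula gives $(a,0)^n=(a^n,0)$. Put $e=GRP_R(a)$ and choose $n$ with $a^ne=a^n$ and $r_R(a^n)=r_R(e)$. Using $a^n=a^ne$ to rewrite $a^nb+\mu a^n=a^n(b+\mu e)$, a direct computation shows that $(a^n,0)(b,\mu)=0$ holds if and only if $e(b+\mu e)=0$, that is, if and only if $(e,0)(b,\mu)=0$; hence $r\big((a,0)^n\big)=r\big((e,0)\big)$ with $(e,0)$ a projection of $R_1$. This not only settles the case $\lambda=0$ but shows $GRP_{R_1}((a,0))=(GRP_R(a),0)$, which is the required preservation of $GRP$.

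The case $\lambda\neq 0$ is the heart of the matter and the step I expect to be the main obstacle. Because $K$ is a domain, $(a,\lambda)^n=(c_n,\lambda^n)$ has $\lambda^n\neq 0$, so any annihilating pair $(b,\mu)$ must have $\mu=0$; thus $r\big((a,\lambda)^n\big)=N_n\times 0$, where $N_n=\{\,b\in R: c_nb+\lambda^nb=0\,\}$ and $c_nb+\lambda^nb$ is the formal expansion $\sum_{k=0}^{n}\binom{n}{k}\lambda^k a^{n-k}b$. On the other hand, a short check shows that among the candidate projections only $E=(0,1)-(f,0)$ has its annihilator $r(E)=fR\times 0$ contained in $R\oplus 0$, while $r\big((g,0)\big)$ always contains pairs with nonzero second coordinate unless $g=0$. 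Thus the whole problem reduces to showing that, for a suitable $n$, the right ideal $N_n$ is generated by a projection $f$ of $R$, for then $r\big((a,\lambda)^n\big)=r\big((0,1)-(f,0)\big)$. The plan for this is to split $N_n$ into the part on which the scalar shift $a+\lambda$ acts nilpotently, which after resolving the behaviour of $a$ through $GRP_R(a)$ leads to genuine scalar-torsion relations $\lambda^m b=0$, and the complementary part; on the scalar-torsion part hypothesis (3) supplies the bounding projection $e_{\lambda^m}$, and this is exactly the ingredient that replaces the torsion-freeness of Berberian's Theorem \ref{Th1}. Assembling these contributions into a single projection $f$ with $N_n=fR$ is the delicate point, mirroring the corresponding step in the proof of Theorem \ref{Th2}.

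Finally, once $r(X^n)=r(E)$ has been produced in both cases, Proposition \ref{s2pr11} shows that $R_1$ is a generalized weakly Rickart $*$-ring; since $R_1$ has the unity $(0,1)$, Proposition \ref{s2pr8} upgrades this to $R_1$ being a generalized Rickart $*$-ring, into which $R$ embeds via $a\mapsto(a,0)$ with preservation of $GRP$ by the computation of the case $\lambda=0$. The only substantial work, and the place where hypothesis (3) is indispensable, is the verification that the mixed annihilator $N_n$ of the scalar shift is generated by a projection.
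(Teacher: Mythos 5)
Your overall framework coincides with the paper's: form the unitification $R_1=R\oplus K$ with unity $(0,1)$, reduce via Proposition \ref{s2pr8} (and the weak properness transferred to $R_1$) to showing that $R_1$ is a generalized weakly Rickart $*$-ring, and split into the cases $\lambda=0$ and $\lambda\neq 0$. Your $\lambda=0$ case, yielding $GRP(a,0)=(GRP(a),0)$, is exactly the paper's computation and is correct. The problem is the case $\lambda\neq 0$, which you yourself call ``the heart of the matter'': you correctly reduce it to showing that the right ideal $N_n=\{b\in R: c_nb+\lambda^nb=0\}$ equals $fR$ for some projection $f$ of $R$, but you then offer only a plan (``split $N_n$ into the part on which the scalar shift acts nilpotently \ldots\ and the complementary part \ldots\ assembling these contributions \ldots\ is the delicate point'') with no construction of $f$ and no verification that the assembly works. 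That delicate point is the entire content of the theorem and the only place hypothesis (3) enters; leaving it as a sketch is a genuine gap, not a routine omission.

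For comparison, the paper devotes essentially all of its proof to precisely this step. It first establishes a key lemma: for $a\in R$ and $0\neq\lambda\in K$ there is a \emph{largest} projection $g$ with $(ag+\lambda g)^m=0$ for some $m$. The construction takes $e$ dominating both $GRP(a)$ and the projection $e_\lambda$ of hypothesis (3), sets $h=GRP(ae+\lambda e)$ and $g=e-h$; maximality is proved by showing that any projection $k$ with $(ak+\lambda k)^m=0$ satisfies $\lambda(ke-k)=0$ (by comparing coefficients), so that $GRP(ke-k)\leq e_\lambda\leq e$ by (3), which forces $ke=k$ and hence $k\leq g$. With this lemma in hand, $GRP(a,\lambda)=(-g,1)$ is verified, the annihilation condition being checked by passing to $GLP(b)$ and reducing to $(af+\lambda f)^t=0$, whence $f\leq g$ by maximality. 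To complete your argument you would need to supply something equivalent to this lemma; your proposed nilpotent/complementary decomposition of $N_n$ is not an obvious substitute for it, and as written the proposal does not prove the theorem.
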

\begin{proof} Let $ R_1 = R\oplus K$ (the additive group direct sum) with operations as defined above. First we prove that for any self-adjoint element $a\in R$ and $0 \ne \lambda \in K$ there exists largest projection $g$ such that $(ag+\lambda g)^m=0$ for some $m\in\mathbb N $. Let $GRP(a)=e_0$.
	Then  $a^me_0=a^m$ and $a^my=0$ implies $e_0y=0$ for some $m\in\mathbb N$. 
	Let $e_\lambda$ be a projection which exists by the assumption (3).
	Let $e$ be the largest projection in $\{e_0,e_\lambda\}$.
	Let $GRP(ae+\lambda e)=h$. Hence there exists $ m\in\mathbb N$ such that $(ae+\lambda e)^mh=(ae+\lambda e)^m$ and  $(ae+\lambda e)^my=0$ implies $hy=0$.  Now $e_0\leq e$ implies $e_0=e_0e=ee_0$.
	Let $g = e-h$.	Since $(ae+\lambda e)^me=(ae+\lambda e)^m$, we have $h\leq e$, that is $h=he =eh$. Therefore $eg=e-eh=e-h=g$. This gives $g\leq e$ and hence $g=eg=ge$.
	Thus $(ag+\lambda g)^m=(a eg+\lambda eg)^m=(ae+\lambda e)^mg
	=(ae+\lambda e)^m(e-h)=(ae+\lambda e)^m-(ae+\lambda e)^mh=0$.
	To prove $g$ is largest. Suppose $(ak+\lambda k)^m=0$. We have $e_0=e_0e=ee_0$. Since $a^me_0=a^m$, $a^me_0e=a^me$, which implies $a^me_0=a^me$. Therefore $ea^m=a^m$, which gives $kea^mk=ka^mk$. Hence $(ke-k)a^mk=0$. Since $(ak+\lambda k)^m=0$, we have $(ke-k)\{-\binom{m}{1}a^{m-1}k\lambda k-\cdots-\lambda^mk\}=0$. Equating coefficient of $a^{m-1}k$, we get  $\lambda m (ke-k)=0$. 
	Therefore $\lambda (ke-k)=0$. Let $GRP(ke-k)=f$. Then $(ke-k)^nf=	(ke-k)^n $ and $(ke-k)^ny=0$ implies  $fy=0$. Since $(ke-k)^ne=0$, we have $fe=0$. Further $\lambda (ke-k)=0$ implies $\lambda (ke-k)^n=0$.  Which gives $\lambda (ke-k)^nf=0$. Hence $ (ke-k)^n\lambda f=0$ . Therefore $f(\lambda f)=0$ . Thus $\lambda f=0$. By (3) $GRP(f)\leq e_\lambda\leq e$. Therefore $f\leq e$, that is $f=fe=ef$ (since $GRP(f)=f$). Hence $f=0$. As $(ke-k)^n= (ke-k)^nf$ implies $(ke-k)^n=0$. But $(ke-k)^n=\pm (ke-k)$. Hence $\pm (ke-k) =0$ gives $ke=k$. So $(ae+\lambda e)^m k=(aek+\lambda ek)^m=(ak+\lambda k)^m=0$. Therefore $hk=0$. Hence $kg=k(e-h)=ke-kh=k-0=k$. Thus $k\leq g$. Hence $g$ is the largest projection  such that $(ag+\lambda g)^m=0$.  Since $(0,1)$ is the unity element of $R_1$. By above Proposition \ref{s2pr8}  it is enough to show that $R_1$ is a generalized weakly Rickart $*$-ring. Let $(a,\lambda)\in R_1$.\\
	Suppose $\lambda=0$. Since $(a,0)\in R_1$, we have $a\in R$. 
	As $R$ is a generalized weakly Rickart $*$-ring, $GRP(a)=e$ exists in $R$.
	That is for some $n\in \mathbb N$, $a^ne=a^n$; and for $y \in R$, $a^ny=0$ implies $ey=0$.
	We will prove that $GRP(a,0)=(e,0)$. 
	Since $(a,0)^n(e,0)=(a^n,0)(e,0)=(a^ne,0)=(a^n,0)=(a,0)^n$.
	Suppose $(a,0)^n(b,\mu)=0$. Then $(a^n,0)(b,\mu)=0$.
	This implies $(a^nb+\mu a^n,0)=0$, that is $ a^nb+\mu a^n=0$.
	This gives $a^nb+\mu a^ne=0$, and hence $a^n (b+\mu e)=0$.
	Since  $a^ny=0$ implies  $ey=0$, we have $e(b+\mu e)=0$.
	Therefore $eb+\mu e=0$. That is $(e,0)(b,\mu)=0$. Hence $GRP(a,0)=(e,0)$.\\
	Now, suppose $\lambda\ne 0$. Then there exists a largest projection $g$ in $R$ such that $(ag+\lambda g)^t=0$ for some $t\in\mathbb N$. Note that $(-g,1)$ is a projection in $R_1$.
	We prove that $GRP(a,\lambda)=(-g,1)$.
	Consider $(a,\lambda)^t(-g,1)=(a^t+\binom {t}{1}a^{t-1}\lambda+\cdots+\binom {t}{t-1}a\lambda ^{t-1},\lambda^t)(-g,1) =(-a^tg-\binom {t}{1}a^{t-1}\lambda g-\cdots -\binom {t}{t-1}a\lambda^{n-1}g-\lambda^tg+a^t+\binom {t}{1}a^{t-1}\lambda+\cdots+\binom {t}{t-1}a\lambda^{t-1},\lambda^t)=(-(ag+\lambda g)^t+a^t+\binom {t}{1}a^{t-1}\lambda+\cdots+\binom {t}{t-1}a\lambda^{t-1},\lambda^t)=(a^t+\binom {t}{1}a^{t-1}\lambda+\cdots+\binom {t}{t-1}a\lambda^{t-1},\lambda^t)=(a,\lambda)^t$.
	To prove $(a,\lambda)^t(b,\mu)=0$ implies $(-g,1)(b,\mu)=0$. 
	Let $(a,\lambda)^t(b,\mu)=0$. Then $(a^t+\binom {t}{1}a^{t-1}\lambda+\cdots+\binom {t}{t-1}a\lambda^{t-1},\lambda^t)(b,\mu)=0$. 
	This implies $ \lambda^t\mu=0\Rightarrow\mu=0$  (since $\lambda\ne 0$). 
	Therefore  $(a^t+\binom {t}{1}a^{t-1}\lambda+\cdots+\binom {t}{t-1}a\lambda^{t-1},\lambda^t)(b,0)=0$.
	
	 To prove  $(a,\lambda)^t(b,0)^m=0$ implies $(-g ,1)(b,0)^m=0$ for some $m \in \mathbb N$.
	 Let $GLP(b)=f$, then there exist $m \in \mathbb N$ such that 
	 $fb^m=b^m$; and for $y \in R$, $yb^m=0$ implies $yf=0$. 
	Therefore $(a,\lambda)^t(b,0)^m=0$. This implies that 
	$(a^t+\binom {t}{1}a^{t-1}\lambda+\cdots+\binom {t}{t-1}a\lambda^{t-1},\lambda^t)(b^m,0)=0$.
	Therefore $(\{a^t+\binom {t}{1}\lambda a^{t-1}+\cdots+\binom {t}{t-1}\lambda^{t-1}a\}b^m+\lambda^tb^m,0)=0$.
	Hence $(\{a^tf+\binom {t}{1}\lambda a^{t-1}f+\cdots+\binom {t}{t-1}\lambda^{t-1}af+\lambda^tf\}b^m,0)=0$.
	This gives $(af+\lambda f)^tb^m=0$. 
	Since $yb^m=0$ implies $yf=0$, we have $(af+\lambda f)^tf=0$.
	This implies $(af+\lambda f)^t=0$.
	But $g$ is the largest projection such that $(ag+\lambda g)^t=0$.
	Therefore $f\leq g$, which gives  $f(1-g)=0$.
	Hence $(-g,1)(b,0)^m=(-g,1)(b^m,0)=(-gb^m+b^m,0)=((1-g)b^m,0)=((1-g)fb^m,0)=(0,0)=0$.
	
	Hence $GRP(a,\lambda)=(-g,1)$. Thus  $R_1$ is a  generalized Rickart $*$-ring.	
\end{proof}

\section{Parallelogram law, Generalized comparability and partial comparability}
         
   In this section, we prove that generalized Rickart $*$-ring satisfies parallelogram law. A sufficient condition is provided for a generalized Rickart $*$-ring to exhibit partial comparability. It is shown that a pair of projections in a generalized Rickart $*$-ring that satisfy the parallelogram law possesses orthogonal decomposition. 
          
  \begin{proposition}  Let $R$ be a generalized Rickart $*$-ring such that $GLP(x) \sim GRP(x)$ for all $x \in R$. Then $R$ satisfies the parallelogram law.
  	\end{proposition}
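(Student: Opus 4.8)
The plan is to imitate the classical Rickart $*$-ring proof of the parallelogram law, with the ordinary right/left projections replaced by their generalized analogues. Fix projections $e,f$ for which $e\wedge f$ and $e\vee f$ exist, and set $w=(1-f)e$. I would try to show
$$GRP(w)=e-e\wedge f \qquad\text{and}\qquad GLP(w)=e\vee f-f,$$
because then the standing hypothesis $GRP(w)\sim GLP(w)$ gives at once $e-e\wedge f\sim e\vee f-f$, which is exactly the parallelogram law.

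The two ``$\le$'' inclusions are the routine half. Since $we=(1-f)ee=w$, we get $w^ne=w^n$ for all $n$, while $e\wedge f\le f$ gives $w(e\wedge f)=(1-f)(e\wedge f)=0$, hence $w^n(e\wedge f)=0$. Thus $w^n(e-e\wedge f)=w^n$ for the integer $n$ witnessing $GRP(w)$, and the minimality in Proposition~\ref{s2pr3}(2) yields $GRP(w)\le e-e\wedge f$. Passing to $w^{*}=e(1-f)$ and using the (easily checked) identity $GLP(w)=GRP(w^{*})$ together with $1-(e\vee f)=(1-e)\wedge(1-f)$, the same argument gives $GRP(w^{*})\le (e\vee f)-f$, i.e. $GLP(w)\le e\vee f-f$.

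The substance is in the reverse inclusions. Put $p=e-GRP(w)$, so that $p\le e$ and $w^{n}p=0$; the goal is $p\le f$ (equivalently $p\le e\wedge f$), which upgrades $GRP(w)\le e-e\wedge f$ to equality, the statement for $GLP(w)$ being dual. If the involution were \emph{proper} this is standard: one checks $r(w)=r(w^2)=\dots=r(w^{n})$ by peeling off one factor of $w$ at a time, at each stage producing an element $u$ (built from $w$ and $e$) with $(eu)^{*}(eu)=0$ and then $u^{*}u=0$, whence $u=0$; this collapses the problem to the case $n=1$, where $wp=0$ reads $(1-f)p=0$, i.e. $p\le f$.

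The main obstacle is precisely this collapse. A generalized Rickart $*$-ring carries only a \emph{weakly} proper involution (Proposition~\ref{s2pr10}(2)), so the relations $(eu)^{*}(eu)=0$ yield merely that $eu$ is nilpotent, not that it vanishes, and the annihilators $r(w^{k})$ need not stabilize. This is exactly where the hypothesis $GLP(x)\sim GRP(x)$ and the assumed existence of $e\wedge f,\,e\vee f$ must do the work that properness does in the Rickart case. Rather than pinning down $GRP(w)$ and $GLP(w)$ exactly, I expect the clean route is to show that the two ``defect'' projections $\big(e-e\wedge f\big)-GRP(w)$ and $\big(e\vee f-f\big)-GLP(w)$ are themselves equivalent --- again by applying the hypothesis to a suitable element manufactured from $w$ and these defects --- and then to add this equivalence orthogonally to $GRP(w)\sim GLP(w)$, recovering $e-e\wedge f\sim e\vee f-f$. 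I anticipate that controlling these nilpotent defect parts, and verifying the orthogonal additivity of $\sim$ in the absence of a proper involution, will be the delicate step.
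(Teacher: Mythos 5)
Your strategy is the same as the paper's: apply the hypothesis $GLP(x)\sim GRP(x)$ to the single element $x=e(1-f)$ (your $w=(1-f)e$ is just $x^{*}$, so $GRP(w)=GLP(x)$ and the two formulations coincide), after identifying $GLP(e(1-f))$ with $e-e\wedge f$ and $GRP(e(1-f))$ with $e\vee f-f$. The paper's proof consists of exactly this reduction: it \emph{asserts} the identities $e\vee f=f+GRP(e(1-f))$ and $e\wedge f=e-GLP(e(1-f))$ and then invokes the hypothesis. Your ``routine half'' is correct as written: $w^{n}e=w^{n}$ and $w(e\wedge f)=0$ give $GRP(w)\le e-e\wedge f$ by the minimality in Proposition~\ref{s2pr3}(2), and the dual computation with $w^{*}$ gives $GLP(w)\le e\vee f-f$.

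As a proof, however, the proposal is incomplete, and you say so yourself: the reverse inequalities are never established, and without them the hypothesis only produces an equivalence between two projections dominated by $e-e\wedge f$ and $e\vee f-f$, not the parallelogram law. The obstruction you point to is genuine. Writing $g=GRP(x)$ for $x=e(1-f)$, the classical argument needs $xg=x$ (so that, for instance, $e\bigl(1-(f+g)\bigr)=x-xg=0$, making $f+g$ an upper bound of $\{e,f\}$), whereas the generalized right projection only guarantees $x^{n}g=x^{n}$; the element $x(1-g)$ is killed by $x^{n-1}$ but need not vanish, and weak properness (Proposition~\ref{s2pr10}(2)) only makes the auxiliary elements nilpotent rather than zero. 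Your closing suggestion --- comparing the two ``defect'' projections and adding equivalences orthogonally --- is a plan, not an argument, so the proposal fails at precisely the step that carries all the content. You should be aware, though, that the paper's own one-line proof does not close this gap either: it states the two lattice identities without justification, so supplying a proof of them (or a counterexample) would be doing strictly more than the text does.
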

     \begin{proof}  Let $x=e - ef =e(1-f)$. Then $e\vee f=f + GRP(e(1-f))$.
     	Therefore $ GRP(e(1-f))=e \vee f-f$. Also $GLP(e(1-f))=e - e\wedge f$.
     	Since $GLP(x) \sim GRP(x)$, we have $GLP(e(1-f)) \sim GRP(e(1-f))$.
     	Therefore $ e-e\wedge f \sim e \vee f-f$. Thus $R$ satisfies the parallelogram law.
      \end{proof}
      	
      Projections $e$ and $f$ are said to be in a {\it position $p'$} in case $e \wedge (1-f)=0$ and $e \vee (1-f)=1$, that is $e$ and $1-f$ are complementary.
    
      \begin{proposition} \label{s3pr5} Let $R$ be a generalized Rickart $ *$-ring and $e,~f$ are projections in $R$. Then the following are equivalent. 
      	\begin{enumerate}
     \item $e,f$ are in position $p'$.
    \item $GLP(ef)=e$ and $GRP(ef)=f$.
     \end{enumerate}
    \end{proposition}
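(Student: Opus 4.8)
The goal is to prove the equivalence of two conditions on projections $e,f$ in a generalized Rickart $*$-ring $R$: (1) that $e$ and $f$ are in position $p'$, meaning $e\wedge(1-f)=0$ and $e\vee(1-f)=1$; and (2) that $GLP(ef)=e$ and $GRP(ef)=f$. My plan is to prove this by establishing each implication separately, and in both directions the engine will be the lattice identities connecting meets and joins of projections with the generalized left and right projections of the product $ef$. The key observation I would exploit is that for a projection $e$ and the element $x=ef$, the generalized left projection $GLP(ef)$ measures ``how much of $e$ survives multiplication by $f$'', while $GRP(ef)$ measures the analogous quantity on the right. I would begin by recording the basic facts from Proposition \ref{s2pr6} and the defining property of $GRP$ and $GLP$: namely that $r((ef)^n)=(1-GRP(ef))R$ for suitable $n$, and the dual statement $l((ef)^n)=R(1-GLP(ef))$.

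\medskip

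\noindent\textbf{From position $p'$ to the projection identities.} Assuming (1), I would first argue that $GLP(ef)\le e$ and $GRP(ef)\le f$ hold automatically, since $ef\cdot f=ef$ forces $GRP(ef)\le f$, and dually $e\cdot ef=ef$ forces $GLP(ef)\le e$; these use the minimality built into the $GRP$/$GLP$ definitions together with Proposition \ref{s2pr3}(2). The real content is the reverse inequalities. I would show that if $g=e-GLP(ef)$ is the ``missing part'' of $e$, then $g$ is a nonzero projection with $g\le e$ and $g(ef)^n=0$, which upon unwinding will place $g$ below both $e$ and $1-f$, contradicting $e\wedge(1-f)=0$ unless $g=0$. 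Thus $GLP(ef)=e$, and the dual computation (using $e\vee(1-f)=1$, equivalently $(1-e)\wedge f=0$ after taking complements) yields $GRP(ef)=f$. The main subtlety here is handling the exponent $n$ carefully: unlike the Rickart case, where one works with $ef$ directly, here the annihilator is controlled only by some power $(ef)^n$, so I must verify that the projection $g$ annihilating $(ef)^n$ genuinely lies under $1-f$ and not merely under $1-f$ ``up to nilpotents.''

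\medskip

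\noindent\textbf{From the projection identities to position $p'$.} Conversely, assuming $GLP(ef)=e$ and $GRP(ef)=f$, I would show $e\wedge(1-f)=0$ by taking any projection $h\le e$ with $h\le 1-f$, so that $hf=0$, hence $h(ef)=hef=0$ and more generally $h(ef)^n=0$ for all $n$; this places $h$ in $R(1-GLP(ef))=R(1-e)$, giving $he=0$, but $h\le e$ means $h=he=0$. The identity $e\vee(1-f)=1$ follows by a symmetric argument using $GRP(ef)=f$: any projection $h$ orthogonal to both $e$ and $1-f$ must satisfy $he=0$ and $hf=h$, whence $h(ef)^n$ and its relatives vanish in a way that forces $h\le 1-GRP(ef)=1-f$, combined with $h=hf$ to conclude $h=0$, so the join is all of $1$.

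\medskip

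\noindent\textbf{Anticipated obstacle.} The hardest step will be the passage through the exponent $n$ in the direction (1)$\Rightarrow$(2), specifically showing that the ``defect'' projection $g=e-GLP(ef)$ actually lies below $1-f$ rather than merely annihilating a high power of $ef$. I expect to resolve this by showing that $g(ef)=0$ already (not just $g(ef)^n=0$): since $g\le e$ we have $gef=gf$ after absorbing $e$, and the structure of $GLP$ as the smallest projection fixing $(ef)^n$ on the left should force $gf=0$ directly, giving $g\le 1-f$. Verifying this reduction from $(ef)^n$ down to $ef$ is where the generalized (as opposed to ordinary) Rickart hypothesis must be used with care, and it is the place where a naive transcription of the classical Rickart $*$-ring proof would break down.
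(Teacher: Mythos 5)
Your direction $(2)\Rightarrow(1)$ is correct and complete: from $h\le e$, $h\le 1-f$ you get $h(ef)=hf=0$, hence $h(ef)^n=0$, hence $h\in l\bigl((ef)^n\bigr)=R(1-GLP(ef))=R(1-e)$, so $h=he=0$; the dual computation with $r\bigl((ef)^n\bigr)=(1-f)R$ handles the join. This half is a legitimate first-principles argument and is different from what the paper does.

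The direction $(1)\Rightarrow(2)$, however, contains a genuine gap, and it is exactly the one you flag yourself. Setting $g=e-GLP(ef)$, what you actually know is $g(ef)^n=0$ for the particular exponent $n$ attached to $GLP(ef)$; since $g\le e$ this reads $(gf)(ef)^{n-1}=0$, which does \emph{not} yield $gf=0$. Your proposed fix --- that ``the structure of $GLP$ as the smallest projection fixing $(ef)^n$ on the left should force $gf=0$ directly'' --- is not an argument: minimality of $GLP(ef)$ among projections $p$ with $p(ef)^n=(ef)^n$ says nothing about the first power of $ef$, and the classical Rickart proof of $LP(e(1-f))=e-e\wedge f$ uses precisely that $l(x)$ (not $l(x^n)$) is generated by $1-LP(x)$. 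The same defect recurs in your $GRP$ half, where you would need $(ef)^n g'=0$ to imply $eg'=0$ for $g'=f-GRP(ef)$. The paper avoids this entirely: its proof is a two-line substitution of $1-f$ for $f$ into the identities $e\vee f=f+GRP(e(1-f))$ and $e\wedge f=e-GLP(e(1-f))$, which it invokes without proof (they are the generalized analogues of Berberian's formulas for Rickart $*$-rings). In other words, the step you cannot close is precisely the content of those lattice identities; to complete your argument you would either have to prove them in the generalized setting (i.e., show $e-GLP(ef)$ really is a common lower bound of $e$ and $1-f$, which requires getting from $g(ef)^n=0$ down to $gf=0$), or cite them as the paper does. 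As written, the forward implication is not proved.
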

     \begin{proof} Suppose $e , f$ are in position $p'$.
     	Therefore $e\wedge (1-f)=0$ and $e\vee (1-f)=1$. Note that $ef=e(1-(1-f))$.
     	Since $e\vee f = f+GRP(e(1-f))$, we have
     	 $ GRP(ef)=GRP(e(1-(1-f)))=e\vee (1-f)-(1-f)=e\vee(1-f) - 1  + f = f$.
     	Similarly, $GL(ef) = GLP(e(1 - (1-f)))= e-e \wedge
     	(1-f)=e$. Conversely suppose $GLP(ef)=e$ and $GRP(ef)=f$.
     	Therefore $ GRP(e(1-(1-f)))= f$ implies $e \vee (1-f)-(1-f) = f$.
     	This gives $ e \vee (1-f) - 1+f = f$ that is $e \vee (1-f) = 1$.
     	Similarly $GLP(ef) = e$ implies $e \wedge (1-f) = 0$.
     	Thus $e ,f$ are in a position $p'$.
     \end{proof}
     
     \begin{proposition} Let $R$ be a generalized Rickart $*$-ring. Then the following are equivalent.
     	\begin{enumerate}
     \item $R$ satisfies the parallelogram law. 
     \item If $e, f$ are in position $p'$ then $e \sim f$.
    \end{enumerate}
     \end{proposition}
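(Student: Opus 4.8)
The plan is to prove the two implications separately, exploiting the identities for $GLP$ and $GRP$ of an element of the form $e(1-f)$ recorded in the first proposition of this section, together with the characterization of position $p'$ furnished by Proposition \ref{s3pr5}.

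For $(1)\Rightarrow(2)$, suppose $R$ satisfies the parallelogram law and let $e,f$ be in position $p'$, so that $e\wedge(1-f)=0$ and $e\vee(1-f)=1$. I would simply apply the parallelogram law to the pair $e$ and $1-f$, whose meet and join exist by hypothesis, obtaining
\[
e - e\wedge(1-f) \;\sim\; e\vee(1-f) - (1-f).
\]
Substituting the position $p'$ values $e\wedge(1-f)=0$ and $e\vee(1-f)=1$ collapses the left-hand side to $e$ and the right-hand side to $1-(1-f)=f$, giving $e\sim f$ at once. This direction is immediate once the law is invoked with the complement $1-f$ in the role of the second projection.

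For the converse $(2)\Rightarrow(1)$, I would fix arbitrary projections $e,f$ for which $e\wedge f$ and $e\vee f$ exist, and set $x=e(1-f)$. By the identities established earlier, $GLP(x)=e-e\wedge f$ and $GRP(x)=e\vee f-f$; write $p=GLP(x)$ and $q=GRP(x)$, both projections. The aim is to show that $p$ and $q$ are in position $p'$, for then hypothesis $(2)$ forces $p\sim q$, which is exactly the parallelogram identity $e-e\wedge f\sim e\vee f-f$. The decisive step is the computation $pq=x$: expanding $(e-e\wedge f)(e\vee f-f)$ and using $e\le e\vee f$, $e\wedge f\le e\le e\vee f$ and $e\wedge f\le f$ to simplify each of the four products, everything cancels except $e-ef$, so $pq=e(1-f)=x$.

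With $pq=x$ secured, one reads off $GLP(pq)=GLP(x)=p$ and $GRP(pq)=GRP(x)=q$, whence Proposition \ref{s3pr5}, applied to the pair $p,q$, certifies that $p$ and $q$ are in position $p'$. Hypothesis $(2)$ then yields $p\sim q$, and since $e,f$ were arbitrary (among pairs with existing meet and join) this is precisely the parallelogram law. The main obstacle — really the only nonroutine point — is recognizing that the product of the two candidate projections $GLP(x)$ and $GRP(x)$ collapses back to the original element $x$; once that identity is in hand, the position $p'$ characterization of Proposition \ref{s3pr5} does the rest, and what remains is bookkeeping with the order relation on projections.
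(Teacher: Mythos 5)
Your proof is correct and follows essentially the same route as the paper: the forward direction is the identical substitution $f\mapsto 1-f$ in the parallelogram law, and the converse hinges, exactly as in the paper, on showing that the generalized left and right projections of a suitable product recover that product, so that Proposition \ref{s3pr5} certifies position $p'$ and hypothesis (2) delivers the equivalence. The only differences are cosmetic: you work with $x=e(1-f)$ throughout rather than with $ef$ followed by the substitution $f\mapsto 1-f$ at the end, and you verify the key identity $GLP(x)\,GRP(x)=x$ by expanding $(e-e\wedge f)(e\vee f-f)$ using the order relations among the projections, which is if anything more explicit than the paper's corresponding step $ef=e'(ef)f'$.
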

     \begin{proof} Suppose $R$ satisfies the parallelogram law.
     	Let $e, f$ be projections in a position $p'$.
     	Thus $ e \wedge (1-f) = 0$ and $e \vee (1-f) = 1$.
     	By the parallelogram law $e-e\wedge f \sim e\vee f-f$.
     	Replacing $f$ by $1-f$, we get $e - e\wedge(1-f) \sim e\vee(1-f) - (1-f)$.
     	Hence $e-0 \sim 1-1+f$, that is $e\sim f$.
     	Conversely, suppose $e,f$ are in a position $p'$ implies $e \sim f$.
     	Let $e,f$ be a pair of projections.
       Let $GRP(ef) = f'$ and $GLP(ef) = e'$.
     Since $ef f = ef$, we have $f' \le f$ that is $f'=f'f$.
     Similarly, since $e ef = ef$, we have $e' \le e$ that is $e'=e'e$.
      Therefore $ef=e'(ef)f'=(e'e)ff'=e'f'$.
     	Thus $ GRP(e'f') = f'$  and $GLP(e'f') = e'$.
     By Proposition \ref{s3pr5}, $e'$ and $f'$ are in a position $p'$. 
     Therefore $e'$ $\sim$ $f'$.
     Note that $e\wedge f = e-GLP(e(1-f))$.
     	Replacing $f$ by $1-f$, we get $ e\wedge(1-f)= e - GLP(ef) = e - e'$.
     	Hence $e' = e-e\wedge(1-f)$. Similarly  $f' = e\vee(1-f)-1+f$. 
     Therefore	$e' \sim f'$, which gives $e-e\wedge(1-f) \sim e \vee  (1-f)-1+f = f-(1-e)\wedge f$.
     Hence $ e-e\wedge(1-f)\sim f-(1-e)\wedge f$.
     Thus $ R $ satisfies the parallelogram law.
     \end{proof}
   Projections $e$ and $f$ in a $*$-ring $R$ are said to be generalized comparable if there exists central projection $h$ such that $he\lesssim hf$ and    $(1-h)f\lesssim (1-h)e$.
    We say that $R$ has generalized comparability (GC) if every pair of  projections is generalized comparable.    
   Projections $e$ and $f$ in a $*$-ring $R$ are said to be very orthogonal if there exists central projection $h$ such that $he=e$ and $hf=0$
      
     \begin{proposition} If projection $e$ and $f$ are very orthogonal in a generalized  Rickart $*$-ring $R$, then
     	$e,f$ are orthogonal, $GRP(e) GLP(f)=0$ and $eRf=0$.     	
     \end{proposition}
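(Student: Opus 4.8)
The plan is to extract the central projection from the hypothesis and then push it across every relevant product. By the definition of very orthogonality there is a central projection $h$ with $he=e$ and $hf=0$; since $h$ commutes with every element of $R$, I also have $eh=he=e$ and $fh=hf=0$, and these four identities are all the input the argument needs. First I would establish orthogonality: writing $e=eh$ and using centrality of $h$, one gets $ef=(eh)f=e(hf)=e\cdot 0=0$, and then $fe=(ef)^*=0$ because $e$ and $f$ are self-adjoint. Hence $ef=fe=0$, so $e$ and $f$ are orthogonal.

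Next I would dispatch the two remaining claims. For $GRP(e)\,GLP(f)=0$ the quickest route is to feed the orthogonality just obtained into Proposition \ref{s2pr3}(1): since $R$ is a generalized Rickart $*$-ring and $ef=0$, that proposition immediately yields $GRP(e)\,GLP(f)=0$, so there is no need to compute the individual generalized projections. (If one prefers an explicit description, note that for a projection $e^n=e$ for all $n$, so $e$ satisfies the defining conditions of $GRP(e)$ and, by the minimality in Proposition \ref{s2pr3}(2), $GRP(e)=e$ and dually $GLP(f)=f$, recovering $GRP(e)\,GLP(f)=ef=0$.) For $eRf=0$ I would take an arbitrary $r\in R$ and again move $h$ through the product: $erf=(eh)r f=e(hr)f=e(rh)f=(er)(hf)=er\cdot 0=0$, so $eRf=0$.

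I do not anticipate a genuine obstacle in this statement: the whole proof is bookkeeping with the central projection $h$, and the only external ingredient is Proposition \ref{s2pr3}(1), which converts the orthogonality $ef=0$ into $GRP(e)\,GLP(f)=0$. The one step worth stating carefully is the repeated use of centrality of $h$ (so that $hr=rh$ and $eh=e$), since every equality above depends on being able to commute $h$ past arbitrary ring elements; once that is granted, the three conclusions follow in a single line each.
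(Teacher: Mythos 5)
Your proposal is correct and follows essentially the same route as the paper: use centrality of $h$ to get $ef=hef=ehf=0$ and $eRf=heRf=eRhf=0$, and observe $GRP(e)=e$, $GLP(f)=f$ so that $GRP(e)\,GLP(f)=ef=0$. Your primary appeal to Proposition \ref{s2pr3}(1) for the middle claim is a harmless shortcut the paper does not take, but your parenthetical explicit computation is exactly the paper's argument.
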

     \begin{proof} Suppose $e$ and $f$ are very orthogonal.
     	Therefore their exists central projection $h$ such that $he =e$  and  $hf= 0$.
     	Hence $ ef = hef = ehf = 0$. Therefore $e$ and $f$ are orthogonal.
     	Further, $GRP(e)= e$ and $GLP(f)= f$. Hence $GRP(e)GLP(f)= ef = 0$.
     	Also, $eRf = heRf = eRhf = 0$.
      \end{proof}	
     	
     \begin{example} Orthogonal projections need not be very orthogonal. 
     	 In $\mathbb{Z}_{12}$, the projections $0,1,4,$ and $9$ are all central.
     Since $2\cdot6 = 0$, we have $2$ and $6$ are orthogonal.
     	But $h\cdot2 = 2$ and $h\cdot6 = 0$ does not hold for any central projection $h$ in $\mathbb{Z}_{12}$.
     	Therefore $2, 6$ are not very orthogonal.	                 
    \end{example}
     Projections $e$ and $f$ in a $*$-ring $R$ are said to be partially comparable if there exists nonzero projections $e_0$ and $f_0$ such that $e_0\leq e , f_0\leq f$ and $e_0\sim f_0$.
    	We say that $R$ has partial comparability (PC) if $eRf\neq 0$ implies $e,f$ are partially comparable.    
      A $*$-ring is said to have orthogonal GC if every pair of orthogonal projections is generalized comparable. 
   
     \begin{proposition} If $R$ is a generalized Rickart $ *$-ring  with $GRP(x) \sim GLP(x)$ for all $ x \in R$ then $R$ has $PC$.
     \end{proposition}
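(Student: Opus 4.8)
The plan is to transplant the classical argument showing that a Rickart $*$-ring in which $LP(x)\sim RP(x)$ enjoys partial comparability, with $RP,LP$ replaced by $GRP,GLP$. To verify $PC$ I assume $eRf\neq 0$ and must exhibit nonzero projections $e_0\le e$ and $f_0\le f$ with $e_0\sim f_0$. Since $eRf\neq 0$, there is $x\in R$ with $a:=exf\neq 0$; because $e,f$ are projections we immediately get $ea=a$ and $af=a$. The candidates are $e_0:=GLP(a)$ and $f_0:=GRP(a)$, and by the standing hypothesis $GLP(a)\sim GRP(a)$ these are automatically equivalent, so the whole proof reduces to checking that $e_0\le e$, $f_0\le f$, and that $e_0,f_0\neq 0$.

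The two containments are routine. From $af=a$ one obtains $a^nf=a^n$ for every $n$ by induction ($a^nf=a^{n-1}(af)=a^n$); applying this to the integer $n$ attached to $GRP(a)=f_0$ gives $a^n(1-f)=0$, whence the defining implication for $GRP$ yields $f_0(1-f)=0$, i.e. $f_0\le f$. Symmetrically, $ea=a$ gives $ea^m=a^m$ for all $m$, so $(1-e)a^m=0$ for the integer attached to $GLP(a)=e_0$, and the defining implication for $GLP$ gives $(1-e)e_0=0$, that is $e_0=ee_0$; taking adjoints (projections are self-adjoint) this reads $e_0=e_0e$, i.e. $e_0\le e$. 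With $e_0\le e$, $f_0\le f$ and $e_0=GLP(a)\sim GRP(a)=f_0$ in hand, $e$ and $f$ are partially comparable, so $R$ has $PC$.

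The one delicate point, and the step I expect to be the real obstacle, is the nonzero-ness of $e_0$ and $f_0$. Since $f_0=GRP(a)$ satisfies $a^n=a^nf_0$, we have $f_0=0$ exactly when $a^n=0$, and likewise $e_0=0$ exactly when some power of $a$ vanishes; thus $e_0,f_0\neq 0$ precisely when the connecting element $a$ is not nilpotent. Consequently the heart of the argument is to choose $a\in eRf$ non-nilpotent. I would secure this from $eRf\neq 0$ as follows: starting from any $0\neq a=exf$, pass to the self-adjoint corner elements $aa^*\in eRe$ and $a^*a\in fRf$ and invoke the weakly proper involution guaranteed by Proposition \ref{s2pr10}, with the aim of extracting a non-nilpotent element connecting suitable subprojections of $e$ and $f$. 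Establishing that $eRf\neq 0$ really does force such a witness, equivalently some $a$ with $GRP(a)\neq 0$ and $GLP(a)\neq 0$, is the crux of the matter and the place where care is needed; once such an $a$ is fixed, the containments above and the appeal to the equivalence hypothesis finish the proof at once.
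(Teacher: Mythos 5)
Your argument is, step for step, the one the paper gives: take a nonzero $a=exf\in eRf$, put $e_0=GLP(a)$ and $f_0=GRP(a)$, deduce $f_0\le f$ from $a^nf=a^n$ together with the defining implication for $GRP$, get $e_0\le e$ symmetrically, and invoke the hypothesis $GLP(x)\sim GRP(x)$ to conclude $e_0\sim f_0$. Everything you actually carry out is correct and coincides with the paper's proof.

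The step you single out as the crux --- that $e_0$ and $f_0$ must be \emph{nonzero}, as the definition of partial comparability demands --- is left unresolved in your proposal. You correctly observe that $f_0=0$ exactly when $a^n=0$ for the exponent $n$ attached to $GRP(a)$, and you propose to secure a non-nilpotent witness via $aa^*$, $a^*a$ and the weakly proper involution of Proposition \ref{s2pr10}, but you do not carry this out, and it cannot be done in the form you describe: in $M_2(\mathbb C)$ with $e=E_{11}$ and $f=E_{22}$ every element of $eRf$ is square-zero, so no non-nilpotent element of $eRf$ exists, and one must instead argue that the exponent $n=1$ is admissible (i.e.\ that $r(a)$ itself is generated by a projection, which the generalized Rickart condition does not promise for a prescribed $n$). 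So your proof is genuinely incomplete at this point. You should know, however, that the paper's own proof is entirely silent on the issue: it takes an arbitrary nonzero $x=eaf$, forms $e_0=GLP(x)$ and $f_0=GRP(x)$, proves the two containments and the equivalence, and concludes without ever verifying $e_0\neq 0$ and $f_0\neq 0$. Your write-up therefore exposes a real weakness in the published argument rather than introducing a new one, but it does not close the gap either.
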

     \begin{proof} Let $e$ and $f$ be projections in $R$ such that $eRf \neq 0$.
     	Let $x = eaf \in eRf$ be such that $x \neq 0$.
     	Let $GRP(x)= f_{0}$ and $GLP(x)= e_{0}$.
     	Then there exists $ n \in$ $\mathbb N$ such that
     	 $x^{n}f_{0}=x^{n}$; and for $y \in R$,  $x^{n}y=0 $ implies $f_{0}y=0$.
     	Now $xf =x $ gives $x^{n}f=x^{n}$. Since $GRP(x)=f_{0}$, we have $f_{0} \le  f$.
     	Similarly $e_{0}\le e$.
     	As $GLP(x) \sim GRP(x)$, we have  $e_{0} \sim f_{0}$.
     	Therefore there exist $e_{0},f_{0}$ such that $e_{0} \le e$, $f_{0} \le f$ and $e_{0} \sim f_{0}$.
     	Therefore $R$ has $PC$.
     \end{proof}
     
     \begin{example} Let $R=M_{2}(\mathbb{Z}_{3})$ and $e= 
     \begin{bmatrix}
     	1 & 0\\
     	0 & 0
     \end{bmatrix}$, $f=
     \begin{bmatrix}
     	2 & 2\\
     	2 & 2
     \end{bmatrix} \in R$.
     Let $A=\begin{bmatrix}
     	a & b\\
     	c & d
     \end{bmatrix} \in R$ be such that  $A^*A=e$  and  $AA^*=f$.
     Therefore $\begin{bmatrix}
     	a & c\\
     	b & d
     \end{bmatrix}$
     $\begin{bmatrix}
     	a & b\\
     	c & d
     \end{bmatrix}= \begin{bmatrix}
     	1 & 0\\
     	0 & 0
     \end{bmatrix}$ and $\begin{bmatrix}
     	a & b\\
     	c & d
     \end{bmatrix} \begin{bmatrix}
     	a & c\\
     	b & d
     \end{bmatrix} =\begin{bmatrix}
     	2 & 2\\
     	2 & 2
     \end{bmatrix}$. This gives     
     $a^{2}+c^{2} = 1,ab+cd = 0,b^{2}+d^{2} = 0$ and $a^{2}+b^{2} = 2, ac+bd = 2, c^{2}+d^{2} = 2$.
     If $a=0$ then $c=1$ or $2$. In both cases $~b=0,~d=0$. 
     If $a=1$ then $b=c=d=0$. 
     If $a=2$ then $b=0,~c=0,~d=0$.     
    Observe that none of the solution satisfy $a^{2}$+$b^{2}=2$.
    Hence $A^*A=e$ and  $AA^*=f$ do not hold for any $A \in R$.
    Therefore $ e \nsim f$.
       \end{example}
      	
     \begin{example} Let $R=M_{2}$($\mathbb{Z}_{3}$), 
      $e=\begin{bmatrix}
     	1 & 0\\
     	0 & 0
     \end{bmatrix}$,  $f=\begin{bmatrix}
     	2 & 2\\
     	2 & 2
     \end{bmatrix}$, $1-e=\begin{bmatrix}
     	0 & 0\\
     	0 & 1
     \end{bmatrix}$, $1-f=\begin{bmatrix}
     	2 & 1\\
     	1 & 2
     \end{bmatrix} \in R$.
     Note that $e(1-f)=\begin{bmatrix}
     	2 & 1\\
     	0 & 0
     \end{bmatrix}$ , $ef=\begin{bmatrix}
     	2 & 2\\
     	0 & 0
     \end{bmatrix}$ , $f(1-e)=\begin{bmatrix}
     	0 & 2\\
     	0 & 2
     \end{bmatrix}$ , $e(1-e)=\begin{bmatrix}
     	0 & 0\\
     	0 & 0
     \end{bmatrix}$, $f(1-f)=\begin{bmatrix}
     	0 & 0\\
     	0 & 0
     \end{bmatrix}$. 
     Hence $e , f , 1-e , 1-f$ are incomparable.
      For the pair $e , 1-f$, we have $e-e\wedge(1-f)= e$ and $e\vee(1-f)-1+f= f$.
       But $e \nsim  f$. Hence for $e , 1-f$ parallelogram law does not hold. 
     Note that $0 , 1$ are only central projections in $R$. Let $e , f$ as above. For $h=1, he \lnsim hf$ that is $e \lnsim f$, because if $e \lesssim f $ then $e \sim g \le f$ this implies $g=0$ or $f=0$ but $e \sim 0$ gives $e=0$, a contradiction and $ g = f$ gives $e \sim f$  a contradiction. Therefore $e \lnsim f$.
      For $h = 0 , (1-h)f \lnsim (1-h)e$ that is $f\lnsim  e$ as above. Thus $R$ does not have $GC$.
       We have $eRf \neq 0$ but $e , f$ do not have non-zero sub-projections $e_{0} , f_{0}$ such that  $e_{0} \sim f_{0}$. In fact $e$ and $f$ are only non-zero sub-projections of $e , f$ respectively but $e \nsim f$. Thus $R$ does not have $PC$.
      \end{example}
           	
     \begin{proposition} Let $R$ be a generalized Rickart $*$-ring satisfying parallelogram law. If $e,f$  are projections in $R$, then there exists orthogonal decomposition $e=e'+e''$, $f=f'+f''$ with $e' \sim f'$ and $ef''=fe''=0$.
     \end{proposition}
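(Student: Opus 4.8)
The plan is to split each of $e$ and $f$ into the part that is orthogonal to the other projection and the part that ``interacts'' with it, and then to show the interacting parts are equivalent via the parallelogram law. Concretely, I would set $e''=e\wedge(1-f)$ and $f''=f\wedge(1-e)$; these meets exist because in a generalized Rickart $*$-ring every pair of projections has a meet and a join (recall $e\vee f=f+GRP(e(1-f))$ and $e\wedge f=e-GLP(e(1-f))$). Then I put $e'=e-e''$ and $f'=f-f''$. Since $e''=e\wedge(1-f)\le e$ we have $e''e=ee''=e''$, so $e'=e-e''$ is a projection with $e'e''=0$ and $e=e'+e''$; the same reasoning gives the orthogonal decomposition $f=f'+f''$. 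The two orthogonality-to-the-other-projection conditions then fall out of the meet: because $e''\le 1-f$ we get $e''=(1-f)e''$, hence $fe''=f(1-f)e''=0$, and because $f''\le 1-e$ we get $f''=(1-e)f''$, hence $ef''=0$.

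The substance of the argument is the equivalence $e'\sim f'$, and this is exactly where the hypothesis enters. Applying the parallelogram law to the pair $e,1-f$ (whose meet and join exist, so the law applies) yields $e-e\wedge(1-f)\sim e\vee(1-f)-(1-f)$, that is $e'\sim e\vee(1-f)-1+f$. It then remains to recognize the right-hand side as $f'$. Using the De Morgan type identity $1-(e\vee(1-f))=(1-e)\wedge f$, which is precisely the identity already exploited in the proof of the preceding proposition where $e\vee(1-f)-1+f=f-(1-e)\wedge f$, I obtain $e\vee(1-f)-1+f=f-f\wedge(1-e)=f'$. Hence $e'\sim f'$, and together with the previous paragraph this gives the required decomposition.

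The only points that need care are the existence of all the meets and joins invoked and the single lattice identity $1-(e\vee(1-f))=(1-e)\wedge f$; both are already available in this setting, the former from the generalized Rickart condition via the $GRP/GLP$ formulas recalled above, and the latter being the identity the authors themselves use just before. So I do not expect a genuinely new obstacle: the proof is an assembly of the meet characterizations of $e',e''$ and $f',f''$ (which deliver the orthogonality conditions $ef''=fe''=0$ and the two splittings) with one application of the parallelogram law to the pair $e,1-f$ (which delivers $e'\sim f'$).
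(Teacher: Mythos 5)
Your proof is correct, and the decomposition you construct is in fact the same one the paper uses, just written in lattice language: by the identities $e\wedge(1-f)=e-GLP(ef)$ and $f\wedge(1-e)=f-GRP(ef)$ (which follow from the formulas $e\wedge f=e-GLP(e(1-f))$ and $e\vee f=f+GRP(e(1-f))$ that the paper itself records), your $e'$ and $f'$ coincide with the paper's choices $e'=GLP(ef)$ and $f'=GRP(ef)$. Where the two arguments part ways is in the verification. For $e'\sim f'$, the paper shows that $ef=e'f'$ with $GLP(e'f')=e'$ and $GRP(e'f')=f'$, concludes via Proposition \ref{s3pr5} that $e'$ and $f'$ are in position $p'$, and then invokes the previously proved equivalence between the parallelogram law and ``position $p'$ implies $\sim$''; you instead apply the parallelogram law directly to the pair $(e,1-f)$ and finish with the De Morgan identity $1-(e\vee(1-f))=(1-e)\wedge f$. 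Your route is the more direct one (it is essentially what one gets by unwinding the paper's citation chain), at the cost of having to justify the lattice identity explicitly; the paper's route reuses its position-$p'$ machinery and so needs no new computation. For the orthogonality conditions the paper computes $e''(ef)=0$ and uses $e''\le e$, whereas you read $fe''=0$ and $ef''=0$ off the inequalities $e''\le 1-f$ and $f''\le 1-e$ built into the meets; both are immediate. The only hypotheses you lean on beyond the paper's are the existence of the relevant meets and joins and the De Morgan identity, and both are already assumed and used in the paper's surrounding propositions, so there is no gap.
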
	
     \begin{proof} Let $GLP(ef)=e'$ and $GRP(ef)= f'$. Then $ef= e'f'$. 
     Therefore 	$ GLP(e'f')= e'$ and $GRP(e'f')= f'$.
     By Proposition \ref{s3pr5}, we have $e'$ and $f'$ are in position $p'$.
     Therefore  $e' \sim f'$. Let $e''=e-e'$, $f''=f-f'$. Then  $e''(ef) = (e-e')ef = ef-e'ef = ef-ef = 0$.
     Therefore $(e''e)f = 0$. Since $e''\le  e$, we have  $e''f = 0 $. 
     Similarly $(ef)f''=0$ gives $ef'' = 0$.
     \end{proof}

    Disclosure statement: The authors report there are no competing interests to declare.\\

 \noindent {\bf Acknowledgment:}
              The authors are thankful to the anonymous referees for helpful comments and suggestions.

\end{document}